\newcommand{\df}[1]{{\it{#1}}{\index{#1}}}
\newsavebox\myboxA
\newsavebox\myboxB
\newlength\mylenA
\newcommand*\xoverline[2][0.75]{%
    \sbox{\myboxA}{$\m@th#2$}%
    \setbox\myboxB\null
    \ht\myboxB=\ht\myboxA%
    \dp\myboxB=\dp\myboxA%
    \wd\myboxB=#1\wd\myboxA
    \sbox\myboxB{$\m@th\overline{\copy\myboxB}$}
    \setlength\mylenA{\the\wd\myboxA}
    \addtolength\mylenA{-\the\wd\myboxB}%
    \ifdim\wd\myboxB<\wd\myboxA%
       \rlap{\hskip 0.5\mylenA\usebox\myboxB}{\usebox\myboxA}%
    \else
        \hskip -0.5\mylenA\rlap{\usebox\myboxA}{\hskip 0.5\mylenA\usebox\myboxB}%
    \fi}
\newtheorem{theorem}            {Theorem}[section]
\newtheorem{corollary}          [theorem]{Corollary}
\newtheorem{proposition}        [theorem]{Proposition}
\newtheorem{lemma}              [theorem]{Lemma}
\newtheorem{remark}         [theorem]{Remark}
\newcommand{\CC}{\mathbb{C}}
\newcommand{\RR}{\mathbb{R}}
\renewcommand{\AA}{\mathbb{A}}
\newcommand{\cF}{\mathcal{F}}
\newcommand{\DD}{\mathbb{D}}
\newcommand{\ZZ}{\mathbb{Z}}
\newcommand{\QA}{\mathbb{Q}\mathbb{A}}
\newcommand{\sQA}{\mathscr{Q}\mathbb{A}}
\newcommand{\TT}{\mathbb{T}}
\newcommand{\PA}{\mathbb{P}\mathbb{A}}
\newcommand{\sPA}{\mathscr{P}\mathbb{A}}
\newcommand{\fH}{\mathfrak{H}}
\newcommand{\fT}{\mathfrak{t}}
\newcommand{\fD}{\mathfrak{d}}
\newcommand{\fA}{\mathfrak{A}}
\newcommand{\fP}{\mathfrak{P}}
\newcommand{\vg}{{\tt{g}}}
\newcommand{\SSA}{\mathbb{S}\mathbb{A}}
\newcommand{\sSSA}{\mathscr{S}\mathbb{A}}
\newcommand{\FF}{\mathbb{F}_g^2}
\newcommand{\sA}{\mathscr{A}}
\newcommand{\ow}{\overline{w}}
\newcommand{\setF}{\mathfrak{F}}
\newcommand{\rr}{\psi}
\newcommand{\aaa}{\alpha}
\newcommand{\bb}{\beta}
\newcommand{\iE}{\iota}
\newcommand{\tpi}{\vartheta}
\newcommand{\GE}{G}
\DeclareMathSymbol{\shortminus}{\mathbin}{AMSa}{"39}
\newcommand{\Cxix}{\mathbb{C}[x,x^{\shortminus 1}]}
\newcommand{\addresseshere}{%
  \enddoc@text\let\enddoc@text\relax
}
\title[Geometric Dilations]{Geometric Dilations and Operator Annuli}
\author[McCullough]{Scott McCullough}
\address{Department of Mathematics\\
 University of Flordia \\ Gainesville, FL}
\email{sam@ufl.edu}
\author[Pascoe]{James E. Pascoe${}^1$}
\address{Department of Mathematics\\
 University of Flordia \\ Gainesville, FL}
\email{pascoe@ufl.edu} 
\thanks{Research Supported by NSF grant DMS-1953963}
\subjclass[2010]{47A20 (Primary); 47B20, 47B91 (Secondary)}
\keywords{quantum annulus, complete Pick kernel, operator extension, dilation theory}
 \numberwithin{equation}{section}
\begin{document}

\begin{abstract}
 Fix $0<r<1.$ The dilation theory for the quantum annulus $\QA_r,$ consisting of those
 invertible Hilbert space operators $T$  satisfying $\|T\|\,,\|T^{-1}\|\le r^{-1},$
 is determined. The proof technique involves a
 geometric approach to dilation that applies to
 other well known dilation theorems. The dilation theory for the quantum
 annulus is compared, and contrasted, with the dilation theory for
 other canonical operator annuli. 
\end{abstract}

\maketitle

\section{Introduction} 
  Following  M. Mittal in \cite{Mittal} (see 
 also the references therein; e.g., \cite{VernK,Shields}), given $0<r<1,$
  we call \df{$\QA_r,$}
 the collection of invertible  Hilbert space operators $T$
 satisfying $\|T\|\,,\|T^{-1}\|\le r^{-1},$ 
 the \df{quantum annulus}.   In this article we determine
 the dilation theory for $\QA_r.$ The result is stated
 in  Subsection~\ref{s:QAint} immediately below.  The key proof technique 
 -- an adaptation of Nelson's trick \cite{Nelson} --
 is of independent interest. We illustrate how the trick provides
 a geometric alternate approach to the Sz.-Nagy dilation theorem
 and related results. See Section~\ref{s:Nelson}.
 Later in this introduction we discuss connections between the quantum annulus 
 and  other  operator annuli  \cite{BY,CG,TsQA,TsPA}.

\subsection{The quantum annulus}
\label{s:QAint}
For a Hilbert space $H,$ let \df{$B(H)$}
 denote the bounded operators on $H.$
 An invertible operator  $T\in B(H)$ has a \df{$\Cxix$-dilation}
 to an invertible operator  $J\in B(K)$  if there is
 an isometry $V:H\to K$ such that
\begin{equation}
 \label{e:JdilateT}
 T^n =V^* J^n V,
\end{equation}
 for all integers $n.$
  For expository ease, we will often drop the $\Cxix$
  modifier.

 It is evident that if $T$ dilates to  $J$  
 and $J$ is in the quantum annulus, then 
  $T$ is also in the quantum annulus; that is, 
 $\QA_r$ is \df{closed with respect $\Cxix$-compressions}. 
 Hence the collection
 $\QA_r$ is, what we call here, a \df{$\Cxix$-dilation family} 
 \cite{Amodel} in the sense
 that it is closed with respect to (a)  direct sums,
 (b) unital $*$-representations and (c) $\Cxix$-compressions.

 The dilation of equation~\eqref{e:JdilateT}
 is \emph{trivial} \index{trivial dilation} if the range of $V$ reduces $T.$
 An element $T\in \QA_r$ is 
 \df{dilation extremal in $\QA_r$} if
 all of its dilations to $\QA_r$  are trivial; that is if $J\in \QA_r$ 
 and equation~\eqref{e:JdilateT} holds, then the range of $V$ reduces $J.$
 This notion of dilation extremal has connections
 with boundary representations in the sense of Arveson. See
 \cite{Arveson,DK,DM,MS} and the references therein. 

 Let \df{$\sQA_r$} denote those operators $J$ that,  up to unitary equivalence, 
 take  the form
\[
 J=U \begin{pmatrix} r I_{K_{+1}} & 0\\0 & r^{-1} I_{K_{-1}}\end{pmatrix},
\]
 where $U$ is unitary, $J$ acts on the Hilbert 
 space direct sum $K_{+1} \oplus K_{-1}$
 and $I_{K_{\pm 1}}$ is the identity on $K_{\pm 1}.$
 It is immediate that
 $J\in \QA_r.$  As an alternate description,
  $J\in \sQA_r$ if and only if there
 exists projection $P_{\pm}$ that sum to the identity 
 such that $J^*J=r^2P_+\, + \, r^{-2}P_-.$
  Theorem~\ref{t:QA} below, proved in 
 Section~\ref{s:QA}, describes the dilation theory of
 $\QA_r,$ where $\QA_r[H]$ denotes those elements of $\QA_r$
 that act on the Hilbert space $H.$

\begin{theorem}
\label{t:QA} The collection $\sQA_r$ has the following
properties.
\begin{enumerate}[(a)]\itemsep=5pt
\item \label{i:Qextremal}
   If $J\in \sQA_r,$ then $J\in \QA_r$ is 
   dilation extremal in $\QA_r$; 
\item \label{i:Qrep-closed}
   If $J\in \sQA_r[K]$ 
 and $\pi:B(K)\to B(\GE)$ is a unital $*$-representation on
 a Hilbert space $\GE,$ then $\pi(J)\in \sQA_r[\GE].$
\item \label{i:Qdilate}  If $T\in \QA_r,$ then $T$ 
 dilates to a $J\in \sQA_r.$
\end{enumerate}
\end{theorem}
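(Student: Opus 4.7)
I would attack the theorem in the order (b), (c), (a), with (a) being the main obstacle.

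For (b), observe that $J\in\sQA_r$ is equivalent to $J\in\QA_r$ together with the polynomial identity $(J^*J-r^2I)(J^*J-r^{-2}I)=0$, since this forces the spectrum of $J^*J$ into $\{r^2,r^{-2}\}$ and the corresponding spectral projections of $J^*J$ realize the decomposition in the definition of $\sQA_r$. Because a unital $*$-representation $\pi$ preserves adjoints, products, and the identity, $\pi(J)^*\pi(J)=\pi(J^*J)$ still satisfies this identity; contractivity of $\pi$ and $\pi(J)^{-1}=\pi(J^{-1})$ yield $\|\pi(J)\|,\|\pi(J)^{-1}\|\le r^{-1}$, so $\pi(J)\in\sQA_r[E]$.

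For (c), given $T\in\QA_r[H]$, set $M=(T^*T)^{1/2}$ and $Z=(r^{-2}I-M^2)/(r^{-2}-r^2)$, so that $0\le Z\le I$ and $M^2=r^2Z+r^{-2}(I-Z)$. On $K=H\oplus H$ I would define
\[
J=\begin{pmatrix} T & (r^2-r^{-2})(T^*)^{-1}\sqrt{Z(I-Z)}\\ 0 & M^{-1}\end{pmatrix}.
\]
Since $Z$ and $\sqrt{Z(I-Z)}$ are continuous functions of $M^2$ and hence commute with $M$ and $M^{-1}$, a direct block computation yields
\[
J^*J=r^2\tilde P+r^{-2}(I-\tilde P),\qquad \tilde P=\begin{pmatrix} Z & \sqrt{Z(I-Z)}\\ \sqrt{Z(I-Z)} & I-Z\end{pmatrix},
\]
with $\tilde P$ the Naimark projection dilation of $Z$. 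Hence $J\in\sQA_r[K]$, and because $J$ is block upper triangular with invertible diagonal, the isometry $V:H\to K$, $Vh=(h,0)$, satisfies $V^*J^nV=T^n$ for every $n\in\ZZ$.

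For (a), let $\tilde J\in\QA_r[K]$ dilate $J\in\sQA_r[H]$ along an isometry $V:H\to K$, set $\cN=V(H)$, and write $\tilde J=\bigl(\begin{smallmatrix}J&B\\ C&D\end{smallmatrix}\bigr)$ on $\cN\oplus\cN^\perp$. Expanding $V^*\tilde J^*\tilde JV\le r^{-2}I$ and using $V^*\tilde JV=J$ gives
\[
(P_{\cN^\perp}\tilde JV)^*(P_{\cN^\perp}\tilde JV)\le r^{-2}I-J^*J=(r^{-2}-r^2)P_+,
\]
forcing $Ch=0$ for $h\in P_-H$. The same estimate applied to $\tilde J^{-1}$, which dilates $J^{-1}\in\sQA_r$ along $V$ and whose analogue of $P_-$ is the $JJ^*$-projection $Q_+$, gives $Ch=0$ for $h\in P_+H$; hence $C=0$. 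With $\tilde J$ upper triangular, expanding $\langle\tilde J^*\tilde J(h,k),(h,k)\rangle\le r^{-2}(\|h\|^2+\|k\|^2)$ for $h\in P_-H$ and $k\in\cN^\perp$ and varying $h$ over all scalar multiples forces $J^*Bk\perp P_-H$, i.e., $B(\cN^\perp)\subseteq Q_+H$. The parallel calculation applied to $\tilde J^{-1}=\bigl(\begin{smallmatrix}J^{-1}&-J^{-1}BD^{-1}\\ 0&D^{-1}\end{smallmatrix}\bigr)$ produces $B(\cN^\perp)\subseteq Q_-H$; intersecting, $B=0$, so $\tilde J=J\oplus D$ and the dilation is trivial.

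The main obstacle is part (a): the bound $\|\tilde J\|\le r^{-1}$ is tight only on the $P_-$-summand of $J^*J$ and therefore, by itself, kills only half of each off-diagonal block, so one must symmetrically deploy $\|\tilde J^{-1}\|\le r^{-1}$ together with the fact that inversion swaps the spectral roles of $(P_\pm,Q_\pm)$ to eliminate both $B$ and $C$. Parts (b) and (c) are comparatively routine once the quadratic characterization of $\sQA_r$ and the Naimark-style ansatz are in hand.
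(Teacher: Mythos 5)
Your proof is correct, but parts (c) and (a) take genuinely different routes from the paper. For (c), the paper runs Nelson's trick: it builds an analytic $B(H)$-valued function on $\DD$ from the polar decomposition of $T$ and covering maps of the annulus, obtains $M_F\in\sQA_r$ on $L^2(\TT)\otimes H$ when $(T^*T)^{1/2}$ has finite spectrum in $(r,r^{-1})$, and then handles general $T$ by simple-function approximation, direct sums, and Arveson's extension theorem together with item (b). Your construction is instead a one-step Halmos/Naimark-type co-extension on $H\oplus H$: I checked the block computation (the $(2,2)$ entry reduces to the scalar identity $(r^4+r^{-4}-2)z(1-z)+1=(r^{-2}z+r^2(1-z))(r^2z+r^{-2}(1-z))$), and indeed $J^*J=r^{-2}I+(r^2-r^{-2})\tilde P$ with $\tilde P$ a projection, so $J\in\sQA_r$. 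This works for every $T\in\QA_r$ at once, needs no approximation or Arveson extension, and since $J$ is upper triangular it actually gives a lift, $VT=JV$, which is stronger than the required dilation; what it does not provide is the geometric, function-theoretic dilation that is the paper's expository point. For (a), the paper uses Proposition~\ref{p:QA-boundary-alt} and a two-line sandwich: $0=r^2+r^{-2}-J^*J-J^{-1}J^{-*}\succeq V^*(r^2+r^{-2}-F^*F-F^{-1}F^{-*})V\succeq0$, forcing the range of $V$ to be invariant under $F$ and $F^{-*}$, hence reducing. Your corner-killing argument reaches the same conclusion with more bookkeeping, and it is sound, with one compressed step worth spelling out: the norm estimate for $\tilde J^{-1}$ directly yields that $P_{\cN^\perp}\tilde J^{-1}V$ vanishes on $Q_+H$, not that $C$ vanishes on $P_+H$. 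To convert, use that $J$ maps $P_+H$ onto $Q_+H$: for $g\in P_+H$ one gets $\tilde J^{-1}VJg=VJ^{-1}Jg=Vg$, and applying $\tilde J$ gives $\tilde JVg=VJg$, i.e., $Cg=0$; an analogous push-forward through $J$ is implicitly used when you pass from the estimate for $\tilde J^{-1}=\bigl(\begin{smallmatrix}J^{-1}&-J^{-1}BD^{-1}\\ 0&D^{-1}\end{smallmatrix}\bigr)$ to $B(\cN^\perp)\subseteq Q_-H$. With those conversions made explicit the argument is complete. Part (b) is essentially the paper's proof.
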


 Thus $\sQA_r$ is closed with respect to arbitrary direct sums,
 unital $*$-representations and restrictions to reducing subspaces
 and each element of $\QA_r$ dilates to an element of $\sQA_r.$
 Moreover, item~\eqref{i:Qextremal}  says that $\sQA_r$ is
 the smallest subcollection of $\QA_r$ with these properties.
 Hence $\sQA_r$ 
 is canonical and  deserves the moniker \df{dilation boundary} of $\QA_r.$
 (See \cite{Amodel}.)
  The proof of item~\eqref{i:Qdilate}, 
 the dilation,  uses Nelson's trick to produce a geometric dilation.
 See Section~\ref{s:QA}.

\subsection{The Pick annulus}
  Let \df{$\AA_r$} denote the annulus,
\[
  \AA_r =\{z\in \CC: r<|z|<\frac1r\}.
\] 
 The recent papers \cite{BY,TsPA} consider the family of operators
 associated to the kernel $k_r:\AA_r\times \AA_r\to\CC$ \index{$k_r$}
 defined by
\[
k_r(z,w) = \frac{r^{-2}-r^2}{r^{-2}+r^2-zw^*- (zw^*)^{-1}}.
\]
 The Agler model theory for this family was determined in \cite{BY}.
 In \cite{TsPA} the spectral bound for $\PA_r$ is determined, with
 the lower bound also obtained in \cite{BY}. (Tsikalas also obtains
 a lower bound for the spectral constant for the quantum annulus in \cite{TsPA}.) See
 Section~\ref{s:inner}.

 For a self-adjoint operator $A$ on Hilbert space, 
 $A\succeq 0$ \index{$\succeq0$}
 indicates $A$ is positive semidefinite.
 Let \df{$\PA_r$} denote those invertible
 Hilbert space operators $T$ satisfying 
 $\frac{1}{k_r}(T,T^*)\succeq 0$ in the \df{hereditary} sense that
\begin{equation}
\label{d:krT}
\frac{1}{k_r}(T,T^*)=\frac{1}{r^{-2}-r^2} \left (r^{-2}+r^2 -T^*T - T^{-*}T^{-1}\right ) \succeq 0.
\end{equation}
While not obvious, if $T\in\PA_r[H],$ then 
$T$ is in $\QA_r[H].$ See \cite[Proposition~2.2]{BY} and \cite[Lemma 4.1]{TsPA}.

 Because the 
 collection $\PA_r$ has an hereditary
 definition (all the adjoints are on the left in 
 equation~\eqref{d:krT}), it is closed with respect
 to restrictions to $\Cxix$-invariant subspaces:
 if $Y\in \PA_r[K]$ and $H\subseteq K$ is a subspace
 invariant for both $Y$ and $Y^{-1},$ then the 
 restriction of $Y$ to $H$ is in $\PA_r[H].$
 Thus $\PA_r$ is a \df{family}, \index{Agler family} in the sense of
 Agler \cite{Amodel}, with respect to $\Cxix:$
   $\PA_r$ is closed with respect to (a) direct sums, (b)
  unital $*$-representations 
 and (c) restrictions to $\Cxix$-invariant subspaces.  

  An operator $T\in B(H)$ \df{lifts} to an operator $J\in B(K)$
 if there is an isometry $V:H\to K$ such that 
 $VT=JV.$ Equivalently, $T$ is, up to unitary equivalence,
 the restriction of $J$ to an invariant subspace. 
 In the case that both $J$ and $T$ are invertible, it follows
 that $VT^n=J^nV$ for all integers $n.$ In particular,
  $T^n =V^*J^nV$ and hence $T$ dilates to $J.$ 

 Let  \df{$\sPA_r$} denote the collection of Hilbert space operators 
 that have, up to unitary equivalence,  the form
\begin{equation*}
  J = \begin{pmatrix} rV & \frac1r E \\ 0 &\frac1r W^* \end{pmatrix}
 = \begin{pmatrix} V & E \\ 0 & W^* \end{pmatrix} \, \begin{pmatrix}   
   rI_{K_{+1}}&0\\0&r^{-1} I_{K_{-1}}\end{pmatrix},
\end{equation*}
where 
\[
 U=  \begin{pmatrix} V  & E \\0 & W^*\end{pmatrix},
\]
 is a unitary matrix. In particular, $V$ and $W$ are isometries
 and $E$ is a partial isometry with initial space $\ker W^*$
 and final space $\ker V^*.$ 
  A calculation shows
 $\frac{1}{k_r}(J,J^*)=-\frac{1}{k_r}(J^*,J)$ is 
 the projection onto  $\ker V^*.$
 Thus elements in $\sPA_r$ are distinguished members of $\PA_r.$

 The following lifting theorem, which essentially identifies 
 the Agler model theory for $\PA_r$  was established 
 in \cite{BY}.

\begin{theorem}[{\cite{BY}}]
\label{t:BY}
 If $T\in B(H)$ is invertible, then $T$ is in $\PA_r[H]$ if and only if $T$ lifts
 to a $J$ in $\sPA_r.$
\end{theorem}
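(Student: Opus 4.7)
The plan is to dispatch the two directions separately, the forward implication being the substantive one.

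For $(\Leftarrow)$, suppose $T$ lifts to some $J\in\sPA_r$. The text records the computation that $\frac{1}{k_r}(J,J^*)=EE^*$ is the projection onto $\ker V^*\subseteq K_{+1}$, so $J\in\PA_r$. Since the lift exhibits $T$, up to unitary equivalence, as the restriction of $J$ to a subspace invariant for both $J$ and $J^{-1}$, and the family $\PA_r$ is closed under restriction to such $\Cxix$-invariant subspaces by its hereditary definition, $T\in\PA_r[H]$ follows immediately.

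For $(\Rightarrow)$, given $T\in\PA_r[H]$, the plan is to reverse-engineer the block form of $J$ from the positivity condition. Define the positive contractions
\[
q_+=\frac{r^{-2}-T^*T}{r^{-2}-r^2},\qquad q_-=\frac{T^*T-r^2}{r^{-2}-r^2}
\]
on $H$; since $T\in\PA_r\subseteq\QA_r$ one has $0\le q_\pm\le I$, and by construction $q_++q_-=I$. The map $\iota h=(q_+^{1/2}h,q_-^{1/2}h)$ is an isometric embedding of $H$ into $H\oplus H$, which will be enlarged to $K_{+1}\oplus K_{-1}$ in a moment. Comparing the block structure of $J$ and of $J^{-1}$ against the lifting equations $\iota T=J\iota$ and $\iota T^{-1}=J^{-1}\iota$ forces
\[
W^*(q_-^{1/2}h)=rq_-^{1/2}Th,\qquad V^*(q_+^{1/2}h)=rq_+^{1/2}T^{-1}h
\]
on the ranges of $q_-^{1/2}$ and $q_+^{1/2}$, respectively. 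A short calculation shows that contractivity of each of these assignments is equivalent to the inequality $T^*T+T^{-*}T^{-1}\le r^{-2}+r^2$, i.e., to the $\PA_r$ condition itself (the invertibility of $T$ lets one move between conditions phrased on $Th$, $T^{-1}h$, and $h$). The off-diagonal equation $rVq_+^{1/2}h+r^{-1}Eq_-^{1/2}h=q_+^{1/2}Th$, after projecting onto $\ker V^*$ using $EE^*V=0$, then forces $Eq_-^{1/2}h=rEE^*q_+^{1/2}Th$.

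The main obstacle is the Sz.-Nagy-style isometric extension: enlarge $K_{+1}$ and $K_{-1}$ beyond $H$ so that $V$ and $W$ become genuine isometries on these larger spaces, match the resulting defect spaces $\ker V^*$ and $\ker W^*$ via further padding if needed, and verify that $E$ defined as above is a partial isometry with the required initial and final spaces. The standard $\ell^2$-defect construction handles the extensions, while the $\PA_r$ condition supplies the norm identity that makes $E$ a partial isometry between $\ker W^*$ and $\ker V^*$. A final bookkeeping check then confirms that $U=\begin{pmatrix}V&E\\0&W^*\end{pmatrix}$ is unitary, hence $J=U\,\mathrm{diag}(rI_{K_{+1}},r^{-1}I_{K_{-1}})\in\sPA_r$, and that $\iota(H)$ is invariant under $J^{\pm 1}$ with $J|_{\iota(H)}=T$.
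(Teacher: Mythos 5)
The paper does not prove Theorem~\ref{t:BY} at all: it is imported from Bello--Yakubovich \cite{BY}, so there is no internal argument to compare you against, and what you have written is a self-contained, bare-hands dilation construction rather than the model-theoretic route of \cite{BY}. Your easy direction is exactly the observation the paper itself records ($\tfrac{1}{k_r}(J,J^*)$ is the projection onto $\ker V^*$, plus heredity of the $\PA_r$ condition under restriction to $\Cxix$-invariant subspaces), and your necessary conditions in the hard direction are right: $\iota T=J\iota$ and $\iota T^{-1}=J^{-1}\iota$ do force $W^*q_-^{1/2}h=rq_-^{1/2}Th$ and $V^*q_+^{1/2}h=rq_+^{1/2}T^{-1}h$, and contractivity of these maps is the $\PA_r$ inequality (verbatim for $V^*$, and after conjugating by the invertible $T$ for $W^*$). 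The step you wave at --- that $E$ comes out a partial isometry from $\ker W^*$ onto $\ker V^*$ --- is where the theorem actually lives, so let me record why your plan closes. Take $V$ and $W$ to be Sch\"affer-type isometric co-extensions whose adjoints restrict to the two prescribed contractions on $\overline{\mathrm{ran}}\,q_+^{1/2}$ and $\overline{\mathrm{ran}}\,q_-^{1/2}$. Since $V,W$ are isometries,
\begin{equation*}
\|(I-WW^*)q_-^{1/2}h\|^2=\|q_-^{1/2}h\|^2-r^2\|q_-^{1/2}Th\|^2,
\qquad
r^2\|(I-VV^*)q_+^{1/2}Th\|^2=r^2\|q_+^{1/2}Th\|^2-r^4\|q_+^{1/2}h\|^2,
\end{equation*}
and these two quantities agree for every $h$ because $q_-+r^4q_+=r^2T^*T=r^2T^*(q_++q_-)T$ holds identically from the definition of $q_\pm$; so, by polarization, $(I-WW^*)q_-^{1/2}h\mapsto r(I-VV^*)q_+^{1/2}Th$ is isometric and defines $E$ on a subspace of $\ker W^*$ onto a subspace of $\ker V^*$. (Note the identity making $E$ isometric is algebraic; what the $\PA_r$ inequality buys is that each side is a genuine defect, i.e.\ nonnegative.) Padding $V$ and $W$ by unilateral shifts of suitable, say infinite, multiplicity matches the residual defect spaces, so $E$ extends to a partial isometry with initial space $\ker W^*$ and final space $\ker V^*$, which is exactly the unitarity of $U$; then $q_+^{1/2}Th=VV^*q_+^{1/2}Th+(I-VV^*)q_+^{1/2}Th=rVq_+^{1/2}h+\tfrac1rEq_-^{1/2}h$ gives $\iota T=J\iota$, and invariance under $J^{-1}$ is automatic since $T$ and $J$ are invertible. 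So your outline is correct and completable; as written, though, the decisive norm identity and the defect-matching are asserted rather than proved, and they deserve the two lines above.
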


  The \df{Agler boundary} \cite{Amodel} of $\PA_r$
  is the smallest subcollection $\partial \PA_r$
  of $\PA_r$
  that is (1) closed with respect to (a) direct sums, 
 (b) unital *-representations, and (c) restrictions
 to reducing subspaces; and (2) 
 each $T$ in $\PA_r$ lifts to a $J$ in $\partial \PA_r.$
 While it is not evident, families have 
 such a boundary \cite{Amodel}.
  For the family $\PA_r,$ 
  it is straightforward to verify that  $\sPA_r$ 
 satisfies the conditions (1).  On the other hand,
 as observed in Section~\ref{s:PA},
 each $J\in \sPA_r$ is \df{extremal} -- if $J$
 lifts to $F\in \PA_r$ as $VJ=JV,$ then
 the range of $V$ reduces $F$ -- 
 and is therefore in $\partial \PA_r.$ 
  Thus Theorem~\ref{t:BY}
 has the following corollary.

\begin{corollary}
 \label{c:PA} 
 The collection $\sPA_r$ is the Agler boundary of $\PA_r.$
\end{corollary}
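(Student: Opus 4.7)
The plan is to establish two containments. Since $\partial\PA_r$ is by definition the smallest subcollection of $\PA_r$ that is closed under (1a) direct sums, (1b) unital $*$-representations, and (1c) restrictions to reducing subspaces, and (2) to which each $T\in\PA_r$ lifts, it suffices to verify these two conditions for $\sPA_r$ and then apply the extremality observation in the reverse direction.

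For (1a), one takes the direct sum of the block upper-triangular forms on the diagonal. For (1b), a unital $*$-representation $\pi$ sends $U$ to a unitary retaining its block upper-triangular shape relative to $\pi(I_{K_{+1}})$ and $\pi(I_{K_{-1}})$, and sends the scalar diagonal factor to $r\pi(I_{K_{+1}}) + r^{-1}\pi(I_{K_{-1}})$; thus $\pi(J)$ still has the defining form. Closure condition~(1c) is the only one needing an idea: because
\[
J^*J \;=\; r^2 I_{K_{+1}} \oplus r^{-2} I_{K_{-1}}
\]
has exactly two spectral values, any reducing subspace $M$ for $J$ also reduces $J^*J$ and therefore splits as $M = M_+\oplus M_-$ with $M_\pm \subseteq K_{\pm 1}$. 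Tracking the invariance of $M$ and $M^\perp$ under $J$ and $J^*$ then shows that $M_+$ reduces $V$, $M_-$ reduces $W$, $E$ restricts to a map $M_- \to M_+$ with adjoint mapping $M_+ \to M_-$, and the resulting block upper-triangular operator on $M_+ \oplus M_-$ is still unitary. Hence $J|_M \in \sPA_r$. Condition~(2) for $\sPA_r$ is exactly Theorem~\ref{t:BY}.

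Now (1) and (2) for $\sPA_r$, combined with the minimality of $\partial\PA_r$, yield $\partial\PA_r \subseteq \sPA_r$. For the reverse inclusion, fix $J \in \sPA_r \subseteq \PA_r$. Condition~(2) applied to $\partial\PA_r$ produces an $F \in \partial\PA_r$ and an isometry $\widetilde V$ with $\widetilde V J = F \widetilde V$. Extremality of $J$, as flagged for Section~\ref{s:PA}, forces the range of $\widetilde V$ to reduce $F$, so $J$ is unitarily equivalent to the restriction of $F$ to a reducing subspace. Closure of $\partial\PA_r$ under (1c) and (1b) then places $J$ in $\partial\PA_r$.

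The only step that requires more than formal checking is (1c), where one must recognize that the spectral rigidity of $J^*J$ splits reducing subspaces along the factorization $K_{+1} \oplus K_{-1}$; the remaining block identities are routine. Extremality, the conceptual engine of the reverse containment, is used here as a black box, being the content advertised for Section~\ref{s:PA}.
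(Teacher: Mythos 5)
Your proposal is correct, and its outer skeleton is the same as the paper's: verify closure conditions (1) and the lifting condition (2) for $\sPA_r$, invoke minimality to get $\partial\PA_r\subseteq\sPA_r$, then use extremality of each $J\in\sPA_r$ plus closure of $\partial\PA_r$ under reductions to get $\sPA_r\subseteq\partial\PA_r$. The differences are in the two local mechanisms. For closure, you argue structurally: direct sums and $*$-representations visibly preserve the block form, and for a reducing subspace $M$ you split $M=M_+\oplus M_-$ using the two spectral values of $J^*J=r^2I_{K_{+1}}\oplus r^{-2}I_{K_{-1}}$ and track the blocks of $V,W,E$; this sketch does go through (one checks $V,V^*$ preserve $M_+$, $W,W^*$ preserve $M_-$, $E(M_-)\subseteq M_+$, $E^*(M_+)\subseteq M_-$, and $M$ reduces $U$, so $U|_M$ is unitary). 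The paper instead proves Lemma~\ref{l:Jreps}: $J\in\sPA_r$ if and only if $\frac{1}{k_r}(J,J^*)=-\frac{1}{k_r}(J^*,J)$ is a projection, an intrinsic $C^*$-condition from which all three closure properties follow at once; that route is cleaner, especially for unital $*$-representations, where your ``retains its block shape'' step is exactly the bookkeeping the intrinsic characterization avoids. Second, you take extremality of $J\in\sPA_r$ as a black box. That is consistent with how the paper organizes things (it is item~\eqref{i:Pextremal} of Corollary~\ref{c:BY}, proved in Section~\ref{s:PA}), but note the paper's proof is a one-liner you could have supplied: every $J\in\sPA_r$ lies in $\sQA_r$, a lift is in particular a dilation, and Theorem~\ref{t:QA}\eqref{i:Qextremal} then forces the range of the isometry to reduce $F$. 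Modulo that cited input, your argument is complete.
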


 Since the kernel $k_r$ is a
 complete Pick kernel \cite{AM2000}\cite{TsPA}
(see also \cite{AHMR19,AHMR21,Hartz} for recent results
 for complete Pick kernels), 
  the family $\PA_r$ is also 
 a dilation family. 
 Indeed, if $T\in B(H)$ is invertible and there is
 a $Y\in \PA_r[K]$ and an isometry $V:H\to K$ such that
 $T^n = V^*Y^nV$ for all integers $n,$ 
then\footnote{It is enough that $T^n=V^*J^nV$ for $n=\pm 1.$}
\[
\begin{split}
 (r^{-2}-r^2)\, \frac{1}{k_r}(T,T^*) & =  r^2+r^{-2} -T^*T- T^{-*}T^{-1}\\
 &=  r^2+r^{-2} -V^*J^*V V^*JV - V^* J^{-*}V V^* J^{-1}V \\
 & \succeq r^2+r^{-2} - V^*J^*JV -V^* J^{-*}J^{-1}V \\
 & = V^*(r^2+r^{-2} - J^*J-J^{-*}J^{-1})V\\
 &=(r^{-2}-r^2)\, V^*\frac{1}{k_r}(Y,Y^*)V\succeq 0,
 \end{split}
\]
 and hence $T\in \PA_r[H].$
 A consequence, stated precisely in Corollary~\ref{c:BY}
 below, is that the dilation theory and the Agler model
 theory for $\PA_r$ are essentially the same.

\begin{corollary}
\label{c:BY}
 For an invertible operator $T\in B(H),$ the following are equivalent.
 \begin{enumerate}[(i)]\itemsep=5pt
 \item $T\in \PA_r;$
  \item  $T$  dilates to a $Y\in \PA_r;$
 \item $T$ lifts  to a $J\in \sPA_r.$
\end{enumerate}

 Moreover, 
\begin{enumerate}[(a)]\itemsep=5pt
\item \label{i:Pextremal}
   if $J\in \sPA_r,$ then $J$ is dilation extremal; that is,
 any dilation of $J$ to an element of $\PA_r$ is trivial;
\item \label{i:Prep-closed}
  The collection $\sPA_r$ is closed with respect to
 arbitrary direct sums, unital $*$-representations and
 restrictions to reducing subspaces.
\end{enumerate}
\end{corollary}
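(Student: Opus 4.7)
My plan is to assemble each piece of Corollary~\ref{c:BY} from two ingredients already at hand: Theorem~\ref{t:BY} and the hereditary calculation displayed immediately before the statement. The equivalence (i)$\Leftrightarrow$(ii)$\Leftrightarrow$(iii) is then essentially free. Indeed, (iii)$\Rightarrow$(ii) is the observation, made in the paragraph introducing lifts, that a lift is a dilation; (ii)$\Rightarrow$(i) is exactly the displayed calculation, which shows the hereditary positivity $\tfrac{1}{k_r}(T,T^*)\succeq 0$ is inherited from any $\PA_r$-dilation; and (i)$\Rightarrow$(iii) is Theorem~\ref{t:BY} itself.

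For item (a), suppose $J\in\sPA_r$ dilates to $F\in\PA_r$ via an isometry $V$, so $J^n=V^*F^nV$ for every integer $n$. My strategy is to show the dilation is automatically a lift, i.e.\ $FV=VJ$, and then invoke the lift-extremality of $\sPA_r$ already recorded in Section~\ref{s:PA}. Reading the displayed calculation as an equality, with $P_V=VV^*$, gives
\[
(r^{-2}-r^2)\left[\tfrac{1}{k_r}(J,J^*)-V^*\tfrac{1}{k_r}(F,F^*)V\right]=V^*F^*(I-P_V)FV+V^*F^{-*}(I-P_V)F^{-1}V.
\]
A direct computation on the matrix form of $J$ yields $J^*J=r^2P_++r^{-2}P_-$ (so in fact $\sPA_r\subseteq\sQA_r$) and $J^{-*}J^{-1}=r^{-2}P_++r^2P_-$. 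Combining with the $\QA_r$ bounds $V^*F^*FV\le r^{-2}I$ and $V^*F^{-*}F^{-1}V\le r^{-2}I$ produces
\[
V^*F^*(I-P_V)FV\le(r^{-2}-r^2)P_+,\qquad V^*F^{-*}(I-P_V)F^{-1}V\le(r^{-2}-r^2)P_-,
\]
so the defect operators $X:=(I-P_V)FV$ and $Y:=(I-P_V)F^{-1}V$ satisfy $XP_-=0$ and $YP_+=0$. Expanding $V=FF^{-1}V$ produces the single identity $XJ^{-1}+FY=0$; inserting the explicit matrix form of $J^{-1}$ separates this identity on $K_{+1}$ and $K_{-1}$, and using that $V_0^*$ is surjective onto $K_{+1}$ forces $X=Y=0$. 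Hence $FV=VJ$, and the lift-extremality of $\sPA_r$ then yields that $\operatorname{ran} V$ reduces $F$.

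For item (b), closure under arbitrary direct sums is immediate, since a direct sum of unitaries of the prescribed block-triangular form is again of that form and is compatible with the diagonal matrix $\operatorname{diag}(rI,r^{-1}I)$. A unital $*$-representation preserves unitaries, isometries, partial isometries, projections, and block decompositions, so it carries $\sPA_r$ into $\sPA_r$. Finally, if $M\subseteq K$ reduces $J\in\sPA_r[K]$, then $M$ reduces $J^{-1}$ and therefore $J^*J=r^2P_++r^{-2}P_-$; consequently $M$ reduces each spectral projection $P_\pm$ and splits as $M=(M\cap K_{+1})\oplus(M\cap K_{-1})$. A routine check then shows that $V_0$, $W_0$, and $E$ each restrict to the appropriate pieces so that $J|_M$ inherits the defining matrix form of $\sPA_r$.

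The step I expect to be the main obstacle is producing the equality $FV=VJ$ from the hereditary inequality in (a); this requires combining the $\QA_r$ bounds on $F$ with the rigid matrix structure of $J\in\sPA_r\subseteq\sQA_r$ and the explicit form of $J^{-1}$. Everything else is a direct consequence of Theorem~\ref{t:BY}, the preceding hereditary calculation, or a short verification from the defining matrix form of $\sPA_r$.
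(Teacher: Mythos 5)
Your handling of the equivalences (i)--(iii) and of item (\ref{i:Prep-closed}) is essentially sound (the paper gets (\ref{i:Prep-closed}) more cleanly from Lemma~\ref{l:Jreps}, the $C^*$-characterization that $\frac{1}{k_r}(J,J^*)$ is a projection, which makes closure under direct sums, reducing subspaces and $*$-representations immediate; your direct verification works but the reducing-subspace ``routine check'' should at least note that $M$ reduces $J^*J=r^2P_++r^{-2}P_-$ and hence each $P_\pm$). The genuine gap is in item (\ref{i:Pextremal}), in two places. First, your computation rests on the identity $J^{-*}J^{-1}=r^{-2}P_++r^2P_-$, which is false unless $E=0$: writing $J=U_0\operatorname{diag}(rI,r^{-1}I)$ with $U_0=\left(\begin{smallmatrix} V_0 & E\\ 0 & W_0^*\end{smallmatrix}\right)$ unitary, one has $J^{-1}J^{-*}=r^{-2}P_++r^2P_-$ but $J^{-*}J^{-1}=U_0\operatorname{diag}(r^{-2}I,r^2I)U_0^*=\left(\begin{smallmatrix} r^{-2}V_0V_0^*+r^2(I-V_0V_0^*) & 0\\ 0 & r^2 I\end{smallmatrix}\right)$. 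Consequently the bound $V^*F^{-*}(I-P_V)F^{-1}V\preceq(r^{-2}-r^2)P_-$, and hence $YP_+=0$, does not follow; the correct conclusion is only that $Y$ annihilates $\operatorname{ran}V_0\subseteq K_{+1}$. (Your argument is repairable: from $XJ^{-1}+FY=0$, the $K_{-1}$ column gives $FYP_-=0$, so $YP_-=0$; the $K_{+1}$ column gives $r^{-1}X_+V_0^*=-FY_+$, and multiplying on the right by $V_0$ together with $Y_+V_0=0$ forces $X=Y=0$.) Second, and more seriously, even granting $FV=VJ$ you then ``invoke the lift-extremality of $\sPA_r$'' to conclude that $\operatorname{ran}V$ reduces $F$. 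In the paper that lift-extremality is \emph{derived from} item (\ref{i:Pextremal}) (a lift is a dilation, then dilation-extremality applies), so your argument is circular as it stands: you have reduced the statement to an unproved claim of essentially the same strength, since passing from invariance of $\operatorname{ran}V$ under $F$ and $F^{-1}$ to invariance under $F^*$ still requires the hereditary-equality argument.

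The paper's own proof of item (\ref{i:Pextremal}) is a two-line reduction you did not use: every $J\in\sPA_r$ lies in $\sQA_r$ (indeed $J=U_0\operatorname{diag}(rI,r^{-1}I)$ with $U_0$ unitary, so $J^*J=r^2P_++r^{-2}P_-$) and every $F\in\PA_r$ lies in $\QA_r$; hence a dilation of $J$ to $F$ is a dilation within $\QA_r$ of an element of $\sQA_r$, and Theorem~\ref{t:QA} item (\ref{i:Qextremal}) already says it is trivial. If you want to keep your computational route, you must both correct the $J^{-*}J^{-1}$ identity as above and replace the appeal to lift-extremality by an honest argument that $\operatorname{ran}V$ is invariant for $F^*$ --- at which point you will essentially be reproducing the $\sQA_r$ argument anyway, so the detour through $FV=VJ$ buys nothing.
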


\subsection{The annulus as a spectral set}
 Let \df{$R(\AA_r)$} denote the algebra of rational functions with poles off 
 $\overline{\AA_r}.$  An operator $T$ with spectrum in $\overline{\AA_r}$
 has $\AA_r$ as \df{spectral set} if,  for each $\rr\in R(\AA_r),$
\begin{equation*}
 \|\rr(T)\| \le \|\rr\|_\infty,
\end{equation*}
where \df{$\|\rr\|_\infty$} is the sup norm 
 of $\rr$ on $\overline{\AA_r}.$ 

Let
 \df{$\SSA_r$} denote the  collection of
   operators with $\AA_r$ as
 a spectral set. Let  \df{$\sSSA_r$} denote the normal operators
 with  spectrum in the boundary of $\AA_r.$
 In  \cite{Agler-annulus} Agler proves the following
 analog of the Sz.-Nagy dilation theorem for $\AA_r.$
 If $T\in \SSA_r[H]$ and $\sigma(T)\subseteq \AA_r,$  then 
 there exists a Hilbert space $K$ 
 an operator $N\in \sSSA_r[K]$ 
 and an isometry $V:H\to K$ such  that 
\[
 \psi(T)=V^*\psi(N)V,
\]
 for all $\psi\in R(\AA_r).$

 The collection \df{$\SSA_r$} is both a family in the sense of Agler
 and a dilation family. 
 The statement of Agler's dilation theorem 
 essentially identifies the normal operators with the
 spectrum in the boundary of $\AA_r$ as the dilation boundary
 of $\SSA_r.$

 A normal operator with spectrum in the boundary
 of $\PA_r$ has, up to unitary equivalence, the form
\[
  \begin{pmatrix} U_{+1} & 0 \\ 0 & U_{-1} \end{pmatrix}
 \, \begin{pmatrix} r & 0\\0 & r^{-1} \end{pmatrix},
\]
 where $U_{\pm 1}$ are unitary operators. 
 As noted in \cite{BY,TsPA}, there are the  proper inclusions
\[
 \SSA_r \subsetneq \PA_r\subsetneq \QA_r.
\]
 Evidently from the discussions
 above, but somewhat  surprisingly,
  the same chain of inclusions holds
 for the dilation boundaries; that is,
\[
 \sSSA_r \subsetneq \sPA_r\subsetneq \sQA_r.
\]

\begin{remark}\rm
 The family $\SSA_r$ is naturally identified with unital contractive
 (equivalently unital completely contractive) 
 representations of $R(\AA_r)$ on Hilbert space. Namely,
 an operator $T\in \SSA_r[H]$ determines the representation
 $\pi_T:R(\AA_r)\to B(H)$ given by $\pi_T(\rr)=\rr(T).$
 Both $\QA_r$ and $\PA_r$ are also naturally identified
 with collections of representations of $\Cxix$ (or $R(\AA_r)$).
\end{remark}

\subsection{Reader's guide}
  Section~\ref{s:Nelson}
 contains geometric arguments in favor of
 the Sz.-Nagy dilation theorem,  its
 multivariable non-commutative analog
 for row contractions and the dilation 
 theory for doubly commuting contractions 
 all using Nelson's trick. The trick is used
 in Section~\ref{s:QA} to prove Theorem~\ref{t:QA}.

 Section~\ref{s:PA}  provides some details for the 
 proofs of Corollaries~\ref{c:PA} and \ref{c:BY}.
 We emphasize the key ingredient is Theorem~\ref{t:BY}.

 The paper concludes with a few remarks about 
 spectral constants for $\QA_r$ 
 and  musing about Nelson's trick and 
 Ando's Theorem in Section~\ref{s:musing}

\section{Automorphic Nelson's Trick}
\label{s:Nelson}
 To prove  the principal part of Theorem~\ref{t:QA},
 item~\eqref{i:Qdilate}, we adapt 
 a method found in \cite{Nelson} that replaces
 the positive definite (diagonal) matrix in the singular
 value decomposition of a square matrix with automorphisms
 of the unit disc.  Similar methods were employed by 
 \cite{Hartz-iumj} for multivariable weighted shifts. 
 Hartz notes  the trick also appears in \cite{Paulsen} 
 as well as in \cite{Pisier}.  In this section 
we  illustrate the technique by indicating how it
 applies to several familiar dilation results. Namely
 the Sz.-Nagy dilation theorem (Subsection~\ref{s:Nagy}),
    the Bunce-Frazho-Popescu multivariable noncommutative 
 theory of \df{row contractions} \cite{B,F82,F84,Po89,Po91}
 (Subsection~\ref{s:row}) and doubly commuting
 contractions (Subsection~\ref{s:double}).
 In Section~\ref{s:QA} we apply the method to the quantum
 annulus.

\subsection{Unitary dilations of contractions}
\label{s:Nagy}
 For a square matrix $T$ of size $n$ with $\|T\|<1,$ 
 Nelson's construction proceeds as follows.
 The singular  value decomposition of $T$ has the form
 $T=UDW,$ where $D$ is a diagonal matrix whose diagonal entries $\lambda_j$
  are the eigenvalues of $(T^*T)^{\frac12}$
 and $U,W$ are unitary matrices.  In particular
 $0\le \lambda_j <1.$
 Let
\[
 b_j(z)= \frac{\lambda_j-z}{1-\lambda_j z}.
\]
 Thus each $b_j$ is an automorphism of the unit disc $\DD=\{z\in \CC: |z|<1\}.$  Let  \index{$\DD$} \index{unit disc}
\begin{equation}
 \label{e:D}
 D(z) =\begin{pmatrix} b_1(z) & 0& 0 &\cdots & 0\\ 0 & b_2(z) &0 &\cdots & 0\\
   \vdots & \vdots &\vdots & \cdots &  \vdots \\ 
    0&0& 0& \cdots & b_n(z)\end{pmatrix}
\end{equation}
 and note that $D(0)=D.$ Let $F(z)= UD(z)W.$ Thus $F:\DD\to M_n(\CC)$ 
 is a bounded analytic function such that $F(0)=T$ and  $F(\zeta)^*F(\zeta)=I_n$
 for $|\zeta|=1.$  
We note that our construct slightly refines Nelson's in that originally he used independent variables on the diagonal,
which, as we now turn to dilation theory, is less desirable from a minimality perspective.

  We now show how to produce the desired dilation
 of $T.$  Let \df{$H^2(\DD)$} denote the usual
 Hardy-Hilbert space.  By construction, the operator $M_F$ of multiplication
 by $F$ on $H^2(\DD)\otimes \CC^n$  is an isometry. (Note that the cokernel has dimension at most $n.$)  The mapping
 $V:\CC^n\to H^2(\DD)\otimes \CC^n$ defined by $Vx= 1\otimes x$ is an isometry and
\[
 M_F^*Vx= M_F^* 1\otimes x = 1\otimes F(0)^* x = 1\otimes T^*x =VT^*x.
\]
Hence $T^*$ \df{lifts} 
 to the  coisometry $M_F^*,$ providing an explicit, 
geometric,  unitary dilation of $T.$

\subsection{Row contractions}
\label{s:row}
  The Nelson argument extends to 
  the Bunce-Frazho-Popescu multivariable noncommutative 
 theory of \df{row contractions} \cite{B,F82,F84,Po89,Po91}.
 Given a tuple $T=(T_1,\dots,T_\vg)$ of $n\times n$ matrices,
 view $T$ as the $n \times n\vg$ matrix
\[
 T=\begin{pmatrix} T_1 & \cdots & T_\vg\end{pmatrix} 
   \in M_{n,n \vg}(\CC).
\]
  Suppose $T$ is a strict contraction;  that is $\|T\|<1.$

 Once again, consider the singular value decomposition 
\[
 T= UEW,
\]
 where 
\[
 E=\begin{pmatrix} D & 0 & \dots & 0 \end{pmatrix} 
\]
 for a diagonal $n\times n$ matrix  $D$ with nonnegative
 entries that are strictly less than one. In particular, 
 the unitary matrices $U$ and $W$ have sizes $n$ and $n\vg$
 respectively. 
 Define $D(z)$ is as in equation~\eqref{e:D}.
 Thus $D(0)=D$ and $D$ is unitary valued on the boundary 
 of the disc.

 Write $W$ as a block $\vg\times\vg$  matrix
 with entries $W_{j,k}\in M_n(\CC)$ and define
 $F_j:\DD\to M_n(\CC)$ by 
 $F_j(z)= U D(z) W_{1,j}.$
 Let $M_j$ denote the operator of multiplication
 by $F_j$ on $H^2(\DD)\otimes \CC^n$ and observe
\begin{equation}
\label{e:rows}
 M_j^* 1\otimes x = 1\otimes F_j(0)^*x 
   = 1\otimes T_j^*x.
\end{equation}

 Let $\FF$ denote the freely noncommutative Fock space
 on $\vg$ letters $x=\{x_1,\cdots,x_\vg\}$ and 
 let $\varnothing$ denote the empty word (vacuum state). Thus
 $\FF$ is the Hilbert space with orthonormal basis
 the words in $x.$ Let $S_j:\FF\to\FF$ denote the shift
 determined by $S_jf =x_jf$ for $f$ a finite $\CC$-linear
 combination of words and note that $S_j^*S_k=\delta_{j,k}I;$
 that is, the $S_j$ are isometries with pairwise orthogonal ranges.

 Define $\iE:H^2(\DD)\to \FF\otimes H^2(\DD)$ by 
 $\iE h=\varnothing \otimes h.$
 Thus $\iE$ is an isometry. 
 On the Hilbert  space 
\[
K=\left[ H^2(\DD) \otimes \CC^n  \right ] \, \oplus \, \oplus_{2}^{\vg} 
  \left [\FF \otimes H^2(\DD) \otimes \CC^n  \right],
\]
 let
\[
 J_j  = \begin{pmatrix} M_j  & 0 \\
 \begin{bmatrix} \iE\otimes UW_{2,j} \\ \vdots \\ \iE\otimes UW_{\vg,j} \end{bmatrix}
  &  I \otimes S_j \otimes I \end{pmatrix}.
\]
 For instance, with $\vg=3,$
\[
 J_j =  \begin{pmatrix} M_j & 0 &0 \\ 
    \iE\otimes UW_{2,j}   & S_j\otimes I &0 \\
    \iE\otimes U^*W_{3,j}  & 0 & S_j\otimes I \end{pmatrix}.
\]
 Observe that $J_j^*J_k =\delta_{j,k}I.$ Thus $\{J_1,\dots,J_\vg\}$
 is a family of isometries with orthogonal ranges. 

 Define $V:\CC^n\to K$ by $Vx=[1\otimes x] \oplus 0.$ Thus
 $V$ is an isometry and, in view of  equation~\eqref{e:rows},
\[
 J_j^*V= VT_j^*. 
\]

\subsection{Doubly commuting contractions via Nelson}
\label{s:double}
 We note that Nelson's trick also works for a tuple
 of \df{doubly commuting} contractions.  See \cite{BNF, BNS}
 and the references therein.
  Suppose
 $T_1,\dots,T_d$ are commuting $n\times n$ 
 matrices that are strict contractions. 
 Suppose further they  are invertible and 
 \df{doubly commute}, meaning for each $j\ne k,$
\[
 T_j^*T_k= T_k T_j^*.
\]
 Each $T_j$ has its polar decomposition,  $T_j=U_j D_j,$
where $D_j=(T_j^*T_j)^{\frac12}$ and $U_j = T_j D_j^{-1}.$
Thus each $D_j$ is a positive definite strict contraction
  and each $U_j$ is unitary. 
The doubly commuting hypothesis implies  the $D_j$
 commute with one another, the $U_j$ doubly commute
 and $D_jU_k=U_kD_j$ for $j\ne k.$  
 Since the $D_j$ are 
   commuting self-adjoint matrices, they 
 are simultaneously diagonalizable. Hence, for each 
 $1\le j \le d,$ there are pairwise orthogonal projections
 $P_{j,1},P_{j,2},\dots,P_{j,m_j}$ that sum to the identity and
 distinct $0< \lambda_{j,\aaa} <1$ such that
\[
 D_j =\sum_{\aaa=1}^{m_j} \lambda_{j,\aaa} P_{j,\aaa}. 
\]
Further, for all $j,k,\aaa,\bb,$ the operators  $P_{j,\aaa}$ and $P_{k,\bb}$
 commute and, for $j\ne k,$ the operators
 $U_k$ and $P_{j,\bb}$ commute. Let
\[
 b_{j,\aaa}=  \frac{\lambda_{j,\aaa}-z}{1-\lambda_{j,\aaa}z}
\]
 and 
\[
 D_j(z) = \sum_{\aaa=1}^{m_j} b_{j,\aaa}(z) P_{j,\aaa}.
\]
 It follows that $D_j(z)$ and $U_k$ commute for $j\ne k,$
 and the $D_j(z)$ commute with one another. Hence  the resulting
 matrix functions $F_j(z)=U_jD_j(z)$ pointwise doubly commute
 and are unitary valued on the boundary of the disc. 
 Thus the  operators $M_{j}$ of multiplication
 by $F_j$ on $H^2(\DD)\otimes \CC^n$ 
 are isometries and doubly commute. 
 With the usual isometry  
 $V:\CC^n\to H^2(\DD)\otimes \CC^n$ defined by $Vh=1\otimes h,$ 
  one finds
 $VT_j^* = M_{j}^* V$ for each $j.$

\section{The Quantum Annulus}
\label{s:QA}
 In this section we prove Theorem~\ref{t:QA}.
 A preliminary version of the dilation, item~\eqref{i:Qdilate},
 is established using Nelson's trick in Subsection~\ref{s:Nelson-QA}.
 The remaining items of the theorem are proved in 
  Subsection~\ref{s:ArvQA} and are then used, in conjunction
 with the result of Subsection~\ref{s:Nelson-QA},
 to complete the proof of item~\eqref{i:Qdilate} in
 Subsection~\ref{s:proveQA}.

\subsection{Nelson's trick applied to $\QA_r$}
\label{s:Nelson-QA}
 In this subsection
 we  apply Nelson's trick to obtain Lemma~\ref{l:QA} below, an  initial version
 of item~\eqref{i:Qdilate} of Theorem~\ref{t:QA}.
  The proof of Theorem~\ref{t:QA}
 concludes in subsection~\ref{s:proveQA}.
 Let \df{$\sigma(T)$} denote the spectrum of a bounded operator $T$
 on Hilbert space.

\begin{lemma}
 \label{l:QA} 
   Suppose $T\in \QA_r[H].$
  If $\sigma((T^*T)^{\frac12})\subseteq (r,r^{-1})$  is finite, then
  there exists a Hilbert space $K,$ an operator $J\in \sQA_r[K]$
  and an isometry $V:H\to K$ such that
 $T^n=V^*J^nV$ for all integers $n.$
\end{lemma}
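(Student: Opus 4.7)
My plan is to adapt Nelson's trick in two stages: first, replace each singular value of $T$ by a $2\times 2$ block lying in $\sQA_r$; then twist the resulting block-dilation by a unitary extension that encodes the phase in the polar decomposition of $T$. Begin with the polar decomposition $T = UD$, where $U$ is unitary (because $T$ is invertible) and $D = (T^*T)^{1/2}$. By hypothesis $D$ has finite spectrum in $(r, r^{-1})$, so $D = \sum_{j=1}^N \lambda_j P_j$ with $\lambda_j \in (r, r^{-1})$ distinct and $\{P_j\}$ pairwise orthogonal projections summing to $I_H$.

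The core Nelson-type step is that for each $\lambda \in (r, r^{-1})$, choosing $\theta \in [0, \pi/2)$ with $\cos\theta = (\lambda + \lambda^{-1})/(r + r^{-1})$ (possible because $\lambda + \lambda^{-1} < r + r^{-1}$ on $(r, r^{-1})$) produces
\[
  J_\lambda := \begin{pmatrix} r\cos\theta & -r^{-1}\sin\theta \\ r\sin\theta & r^{-1}\cos\theta \end{pmatrix},
\]
which satisfies $J_\lambda^* J_\lambda = \operatorname{diag}(r^2, r^{-2})$; hence $J_\lambda \in \sQA_r[\CC^2]$ with respect to the rank-one projections $E_+ := e_1 e_1^*$ and $E_- := e_2 e_2^*$ on $\CC^2$. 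Its characteristic polynomial has $\lambda$ as a root, so I can extract a unit eigenvector $v_\lambda \in \CC^2$ with $J_\lambda v_\lambda = \lambda v_\lambda$.

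Next, assemble the blocks on $K := \CC^2 \otimes H$ by setting $J_0 := \sum_j J_{\lambda_j} \otimes P_j$ and $V_0 h := \sum_j v_{\lambda_j} \otimes P_j h$. Because every $J_{\lambda_j}$ is built with the same projections $E_\pm$ on $\CC^2$, one computes $J_0^* J_0 = r^2 (E_+ \otimes I_H) + r^{-2} (E_- \otimes I_H)$, so $J_0 \in \sQA_r[K]$. The orthogonality $P_j P_k = \delta_{jk} P_j$ together with $\|v_{\lambda_j}\| = 1$ makes $V_0$ an isometry, and the identity $J_{\lambda_j} v_{\lambda_j} = \lambda_j v_{\lambda_j}$ yields the lifting relation $J_0 V_0 = V_0 D$.

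Finally, to upgrade from a dilation of $D$ to one of $T = UD$, I extend the unitary $U$ on $H$ to a unitary $\widetilde U$ on $K$ satisfying $\widetilde U V_0 = V_0 U$. The prescription $V_0 h \mapsto V_0 U h$ defines a unitary on $\operatorname{Range}(V_0)$, which extends to $K$ by any unitary (say the identity) on $\operatorname{Range}(V_0)^\perp$. Setting $J := \widetilde U J_0$ keeps $J$ in $\sQA_r[K]$ (since $J^* J = J_0^* \widetilde U^* \widetilde U J_0 = J_0^* J_0$), and
\[
  J V_0 = \widetilde U J_0 V_0 = \widetilde U V_0 D = V_0 U D = V_0 T.
\]
Iterating and using the invertibility of $J$ and $T$ gives $J^n V_0 = V_0 T^n$ for every integer $n$, hence $T^n = V_0^* J^n V_0$. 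The only step where the hypothesis that $\sigma(D)$ lies strictly in $(r, r^{-1})$ is essential is the choice of angle $\theta$ in the $2 \times 2$ building block, so I expect this scalar dilation to be the main technical point; the block-diagonal assembly and the unitary extension of $U$ are then essentially formal.
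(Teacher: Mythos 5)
Your argument is correct, but it takes a genuinely different route from the paper. The paper runs Nelson's trick through function theory: using the polar decomposition $T=UP$ and the finite spectral decomposition of $P$, each singular value $\lambda$ is replaced by $\pi_\lambda=\pi\circ m_\lambda$, where $\pi:\DD\to\AA_r$ is the universal covering map and $m_\lambda$ a Möbius map; the resulting analytic operator-valued function $F$ on $\DD$ satisfies $F(0)=T$, the multiplication operator $M_F$ on $L^2(\TT)\otimes H$ satisfies $M_F^*M_F=r^2Q_+\otimes I+r^{-2}Q_-\otimes I$ and hence lies in $\sQA_r$, and compressing to the constants yields the power dilation. You instead stay entirely linear-algebraic: for each singular value $\lambda$ you build the $2\times 2$ block $J_\lambda=R_\theta\,\mathrm{diag}(r,r^{-1})$ (a rotation times the extreme diagonal), with $\theta$ chosen so that $\operatorname{tr}J_\lambda=\lambda+\lambda^{-1}$; since $\det J_\lambda=1$, the characteristic polynomial is $(t-\lambda)(t-\lambda^{-1})$ and $\lambda$ is an eigenvalue, so tensoring against the spectral projections of $|T|$ gives a lift $J_0V_0=V_0\,|T|$ with $J_0\in\sQA_r[\CC^2\otimes H]$, and the unitary polar factor is then absorbed by extending $U$ to a unitary $\widetilde U$ intertwining through $V_0$ and setting $J=\widetilde U J_0$. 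All the computations check: $J_\lambda^*J_\lambda=\mathrm{diag}(r^2,r^{-2})$ with the same projections $E_\pm$ for every block, $V_0$ is an isometry because the $P_j$ are orthogonal, $JV_0=V_0T$, and invertibility of $T$ and $J$ upgrades this to $T^n=V_0^*J^nV_0$ for all $n\in\ZZ$ (note the eigenvector exists even when $\lambda=1$, where the two roots coincide). What each approach buys: the paper's construction exhibits the ``automorphic Nelson trick'' in the form used throughout the article (analytic maps into the class with boundary values in the dilation boundary), at the cost of the covering map and an $L^2(\TT)\otimes H$-sized dilation space; your construction needs no function theory on the annulus, lives on the much smaller space $\CC^2\otimes H$ (finite dimensional when $\dim H<\infty$), and in fact produces a lift of $T$, which is stronger than the dilation the lemma asks for. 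One small point worth making explicit: membership in $\sQA_r$ requires $J$ invertible in addition to $J^*J=r^2P_++r^{-2}P_-$ (in infinite dimensions the latter alone does not force surjectivity), but this is immediate here since each $J_{\lambda_j}$ has determinant one, so $J_0$ is invertible block-by-block and $J=\widetilde U J_0$ is invertible.
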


\begin{proof}
 The operator $T$ has the polar decomposition
\[
 T= UP,
\]
 where $P$ is positive semidefinite and $U$ is unitary. 
 Indeed, since $T^*T$ is invertible,  $P=(T^*T)^{\frac12}$
 and $U=TP^{-1}.$  By hypothesis, 
 there exists a finite set $\setF \subseteq (r,r^{-1})$ 
  such that $P$ has spectral decomposition,
\[
 P = \sum_{\lambda\in \setF} \lambda E_\lambda,
\]
 where $\{E_\lambda:\lambda\in \setF\}$ are pairwise
 orthogonal projections that sum to the identity. 
 
 Let  \df{$\TT$} denote the unit circle, viewed as the
 boundary of $\DD.$  Let $I^{\pm 1}$ denote
 the upper and lower half of $\TT$ respectively. 
 It is well known that the unit disc $\DD$ is the 
 universal cover of $\AA_r.$ 
 Namely, there exists  an onto analytic function $\tpi:\DD\to\AA_r$
 with $\tpi(0)=1$ and mapping $I^{\pm 1}$ to the 
 inner and outer boundary components 
 $\{|z|=r^{\pm 1}\}$ of $\AA_r$  respectively 
 that extends across $\TT$   except at $\pm 1.$ 
 Given $r<\lambda <r^{-1},$  there exists a M\"obuis automorphism
 $m_\lambda$ of the unit disc so that 
 $\tpi_\lambda= \tpi\circ m_\lambda$ maps $\DD$
 onto $\AA_r$ and sends $0$ to  $\lambda.$  There
 exists arcs $I_\lambda^{\pm 1}$ whose disjoint union is the 
 boundary of the unit disc, save for two points,
 that are mapped, under $\tpi_\lambda,$ to the inner and
 outer boundaries of $\AA_r$ respectively.

 Define $F:\DD\to B(H)$ by $F(z)=UP(z),$ where
\[
 P(z) =
  \sum_{\lambda\in \setF} \tpi_\lambda(z) E_\lambda.
\]
Observe $F$ is analytic on $\DD,$ extends to 
 $\TT$ except at finitely many point, 
 and $F(0)=T.$

 Let $L^2=L^2(\TT)$ denote the usual $L^2$ space of the unit circle
 $\TT.$  The characteristic functions $\chi_{\lambda}^{\pm 1}$ 
 of the intervals $I_\lambda^{\pm 1}$ induce  projection operators
 $Q_\lambda^{\pm 1}$ on $L^2(\TT)$ by sending $f$ to $\chi_{\lambda}^{\pm 1}f.$
 Moreover, the projections $Q_\lambda^{\pm 1}$
 are orthogonal and sum to the identity.  Let
 $K=L^2(\TT)\otimes H.$ It follows that
 the $2|\setF|$ projections 
 $\{Q_\lambda^{\pm 1} \otimes E_\lambda:  \lambda\in \setF \}$
 are pairwise orthogonal and sum to the identity.

 Let $M_F$  denote the operator of multiplication by $F$ on 
 $K=L^2(\TT)\otimes H.$
 By construction, given $f\in L^2(\TT),$ for  $h\in H$ 
 and $\zeta\in \partial \DD$ and each $\lambda\in \setF,$
\[
 M_F(\zeta) [Q_\lambda^{\pm 1} \otimes E_{\lambda}] (f\otimes h)(\zeta)
   =  M_F(\zeta)  \chi_{\lambda}^{\pm 1}f(\zeta) \otimes E_\lambda h 
= \tpi_\lambda(\zeta)\chi_{\lambda}^{\pm 1}f(\zeta) \otimes  UE_\lambda h.
\]
Hence, if also $f^\prime \in L^2(\TT)$ and $h^\prime \in h,$ then
\[
\begin{split}
  \langle M_F^* M_F & [Q_\lambda^{\pm 1} \otimes E_{\lambda}] (f\otimes h),
    f^\prime \otimes h^\prime \rangle \\
&  = \langle \,  \tpi_\lambda(\zeta)\chi_{\lambda}^{\pm 1}f(\zeta) \otimes  UE_\lambda h,
 \sum_{\mu \in \setF, j=\pm 1} \tpi_\mu(\zeta)\chi_{\mu}^{j}f^\prime(\zeta)
      \otimes  UE_\mu  h^\prime\, \rangle \\
&= r^{\pm 2} \langle \chi_{\lambda}^{\pm 1}f(\zeta) \otimes  E_\lambda h,
 \chi_{\lambda}^{\pm 1}f^\prime(\zeta) \otimes  E_\lambda h^\prime\rangle\\
&= r^{\pm 2} \langle Q_\lambda^{\pm 1} \otimes E_{\lambda} (f\otimes h),
    f^\prime \otimes h^\prime \rangle.
\end{split}
\]
It follows that 
\[
 M_F^*M_F Q_\lambda^{\pm 1}\otimes I =  r^{\pm 2} Q_\lambda^{\pm 1} \otimes I.
\]
Thus 
\[
 M_F^* M_F = r^2 Q_\lambda^{+}\otimes I + r^{-2} Q_\lambda^{-}\otimes I
\]
 and therefore $M_F\in \sQA_r.$

   Define $V:H\to K=L^2(\TT) \otimes H$ by $Vh=1\otimes h.$
 Thus $V$ is an isometry. Moreover, since $F$ is analytic, if
 $h,g\in H,$ and $n\in \ZZ,$  then
\begin{equation*}
 \langle M_F^{n} Vh,Vg \rangle
 =  \langle M_F^n 1\otimes h,  1 \otimes g\rangle
 = \langle F^n(0)h,  g \rangle
 = \langle T^n h, g\rangle \\
\end{equation*}
 and we obtain
\[
 T^n = V^* M_F^n V
\]
 for all $n\in \ZZ.$ 
\end{proof}

\subsection{The  dilation boundary of $\QA_r$}
\label{s:ArvQA}
 In this subsection we prove items~\eqref{i:Qextremal} and
 \eqref{i:Qrep-closed} of Theorem~\ref{t:QA} and  provide an alternate
 characterizations of the dilation boundary of $\QA_r.$

\subsubsection{Proof of Theorem~\ref{t:QA} items \eqref{i:Qextremal} and \eqref{i:Qrep-closed}}

\begin{proposition}
 \label{p:QA-boundary-alt}
   An invertible operator $T$ is in $\QA_r$ if and only if
\[
 r^{-2}+r^2 - T^*T - T^{-1}T^{-*} \succeq 0 
\]
 and $T\in \sQA_r$ if and only if 
\begin{equation}
\label{e:QAboundaryCstar}
 r^{-2}+r^2 - T^*T - T^{-1}T^{-*} = 0.
\end{equation}
\end{proposition}

\begin{proof}
 Suppose $T$ is invertible and let $A=T^*T.$ 
  By definition, $T\in\QA_r$ if and only  $r^2\preceq A\preceq r^{-2}$ if and only if $\sigma(A),$ the spectrum of $A,$ lies in the interval 
 $[r^2,r^{-2}].$ 

Let 
\[
 A=\int_{\sigma(A)} \lambda \, dE(\lambda)
\]
 denote the spectral decomposition of $A.$ 
 Since $T^{-1}T^{-*}=A^{-1},$ 
\[
 r^{-2}+r^2 - T^*T - T^{-1}T^{-*} =
  \int_{\sigma(A)} \left ( 
  r^{-2}+r^2 -(\lambda +\lambda^{-1}) \right )\, dE.
\]
On the other hand $f:\RR\to\RR$ defined by
  $f(t)=r^{-2}+r^2 -(t+t^{-1})$ is positive if and 
only if $ r^2<t<r^{-2}$ and is  $0$ if and only if 
 either $t=r^2$ or $t=r^{-2}.$  Thus $\sigma(A)\subseteq [r^2,r^{-2}]$ if and only if  
\[
 \int_{\sigma(A)}f(t) \, dE(t) \succeq 0 
\]
  and the first part of the proposition is proved.  Moreover, 
\[
0= r^2+r^{-2}-A-A^{-1} =\int_{\sigma(A)} f(t) \, dE(t)
\]
 if and only if $\sigma(A)\subseteq \{r^2,r^{-2}\}$ if and only if $A=r^2 E(\{r^2\})+ r^{-2}E(\{r^{-2}\})$ if and only if $T\in \sQA_r,$
 completing the proof of the proposition.
\end{proof}

\begin{proof}[Proof of Theorem~\ref{t:QA} item~\eqref{i:Qextremal}]
 Suppose $J\in \sQA_r[H]$  and 
  $F\in \QA_r[K]$ and  there is an isometry $V:H\to K$ 
such that $J^n = V^* F^n V$ for all $n\in \ZZ.$ 
 Using both parts of  Proposition~\ref{p:QA-boundary-alt},
\[
\begin{split}
 0& = r^2+r^{-2} - J^*J - J^{-1}J^{-*} \\
 &=  r^2+r^{-2} - V^*F^*(VV^*)FV - V^*F^{-1}(VV^*)F^{-*}V \\
 &\succeq   V^*(r^2+r^{-2} - F^*F - F^{-1}F^{-*})V \succeq 0.
\end{split}
\]
It follows that $V^*F^*(VV^*)FV=V^*F^*FV$ and also
$V^*F^{-1}VV^*F^{-*}V=V^*F^{-1}F^{-*}V.$ Thus the 
 range of $V$ is invariant for both $F$ and $F^{-*}.$
 In particular, $F^{-*}V=VV^* F^{-*}V= V J^{-*}.$ Thus
 $F^*V=VJ^*$  and hence the range of $V$ is invariant
 for $F^*.$ Hence the range of $V$ reduces $F.$
\end{proof}

\begin{proof}[Proof of Theorem~\ref{t:QA} item~\eqref{i:Qrep-closed}]
 Simply note that if $J^*J = r^2 P_+ +r^{-2}P_-,$ where
  $P_{\pm}$ are 
 orthogonal projections  that sum to the identity
 and $\pi$ is a unital $*$-representation, then $\pi(P_{\pm})$ are orthogonal projections
 that sum to the identity and
\[
 \pi(J)^*\pi(J)=\pi(J^*J) = r^2 \pi(P_+) +r^{-2} \pi(P_-).
\]
\end{proof}

\subsubsection{Boundary representations}
  We call, $\fH_r,$  the universal unital $C^*$-algebra
 with generators $\fT$ and $\fD$ satisfying the relations
 $\fT \fD=1=\fD \fT$ and 
\[
 r^{-2}+r^2 - \fT^*\fT - \fD\fD^* =0
\]
 the \df{donut $C^*$-algebra}. 
 (Compare Proposition~\ref{p:QA-boundary-alt}.)
 Naturally we write $\fD=\fT^{-1}.$
 The existence of $\fH_r$ is  guaranteed since if $T\in \sQA_r$ 
 and $D=T^{-1},$ then $T$ and $D$ satisfy the relations.
 By Proposition~\ref{p:QA-boundary-alt},  if $\pi:\fH_r\to B(H)$
  is a unital  $*$-representation, then 
  $J=\pi(\fT)\in \sQA_r$ and in particular
 $\|\fT\| \le r^{-2}$ and $\|\fT^{-1}\|\le r^{-2}.$
Classically, the von Neumann inequality is equivalent to saying that the map taking continuous functions on the unit circle
to algebra generated by a contraction $T$ such that $e^{in\theta}$ is mapped to $T^n$ and $e^{-in\theta}$ to $(T^*)^n$
is a completely positive map. Similarly, the natural map from from the donut algebra induced by an element of the donut algebra
is completely positive.

 Note that we may also view $\fH_r$ as the completion 
 of  the algebra
 of trigonometric polynomials $\fP=\{\sum_{-N}^N p_n z^n\}$
 endowed with the family of norms on $M_n(\fP)$ given by
\[
 \|p\|_n  =\sup\{\|p(T)\|: T\in \QA_r\}=\sup\{\|p(J)\|:J\in \sQA_r\},
\]
  using Ruan's characterization of 
 operator algebras.

\subsection{The proof of Theorem~\ref{t:QA} item~\eqref{i:Qdilate}}
\label{s:proveQA}
 Fix $T\in \QA_r[H]$ and let $T=UP$ denote its polar decomposition.
 Thus, as before $P=(T^*T)^{\frac12}$ and $U=TP^{-1}.$  Using
 Proposition~\ref{p:QA-boundary-alt}, let 
\[
 P =\int_{[r,r^{-1}]}  \lambda \, dE
\]
 denote the spectral decomposition of $P.$
  Given a positive integer $m,$ 
  choose a measurable simple function $s_m$ 
  taking values in $(r,r^{-1})$ that approximates
  $\lambda$ uniformly within $\frac1m$ on $[r,r^{-1}].$  Let 
\[
 P_m = \int s_m(\lambda) \, dE
\]
 and let $T_m=UP_m.$ It follows that $T_m$ is in $\QA_r[H]$ 
 and satisfies the hypotheses
 of Lemma~\ref{l:QA}. Hence there exists a Hilbert space $K_m,$ 
 an operator $J_m\in \sQA_r$ and an isometry $V_m:H\to K_m$ 
 such that, for $n\in \ZZ,$
\[
 T_m^n =V_m^* J_m^n V_m.
\]
 Let $J=\oplus J_m$ acting on the
 Hilbert space $K=\oplus K_m.$  Thus $J\in \sQA_r$ and moreover,
 if $p(z)=\sum_{j=-N}^N p_j z^j$ is a $d\times d$ matrix-valued polynomial
 and $p(J)\succeq 0,$ then $p(T_m)\succeq 0$ for each $m.$  Since
 $(T_m)$ converges to $T$ in operator norm, we conclude that $p(T)\succeq0.$
 By Arveson's extension theorem \cite{Acomodel},
 there exists a Hilbert space $L,$
 an isometry $V:H\to L,$ and a  unital $*$-representation $\pi:B(K)\to B(L)$  such that
\[
 T^n = V^* \pi(J)^n V
\]
 for $n\in \ZZ.$  Item~\eqref{i:Qdilate} 
 of Theorem~\ref{t:QA}  now follows
 from item~\eqref{i:Qrep-closed}.

\begin{remark}\rm
\label{r:Ag-boundary-Cstar}
 Combining Theorem~\ref{t:QA} with 
  Proposition~\ref{p:QA-boundary-alt} shows $J\in \QA_r$ is
 dilation  extremal (in $\QA_r$) if and only if $J\in \sQA_r.$
 On the other hand, it is possible to prove directly that 
  $T$ satisfies equation~\eqref{e:QAboundaryCstar} if and only if
 $T$ is dilation extremal and thus  deduce item~\eqref{i:Qdilate} of Theorem~\ref{t:QA}
 as a consequence of results in \cite{Amodel,Arveson,DM,DK}.

In Agler's  operator model theory,  a pleasing fact is that the boundary 
 of a family has a C-star characterization. For the dilation family $\QA_r,$ the identity of 
 equation~\eqref{e:QAboundaryCstar} is such a condition.
\qed
\end{remark}

\section{The Boundaries of the Pick Annulus}
\label{s:PA}
  This section contains proofs of the parts of 
  Corollaries~\ref{c:PA} and \ref{c:BY} not covered
  by Theorem~\ref{t:BY} or the discussion in the introduction.
  Thus, what remains to be proved are items~\eqref{i:Pextremal}
  and \eqref{i:Prep-closed} of Corollary~\ref{c:BY}. Namely,
  that the collection 
 $\sPA_r$ (1) is  closed under (a)  $*$-representations, 
 (b) restrictions to reducing subspaces, and (c) arbitrary direct sums
   and (2) all dilations (and hence lifts) 
  of a  $J\in\sPA_r$ to an $F\in \PA_r$ are trivial.

\subsection{The dilation boundary of $\PA_r$} 
 The following lemma establishes item~\eqref{i:Prep-closed}
 of Corollary~\ref{c:BY}.

\begin{lemma}
 \label{l:Jreps}
   An invertible operator $J$ is in $\sPA_r$ if and only if 
\begin{equation}
 \label{e:Jreps}
 \frac{1}{k_r}(J,J^*)=-\frac{1}{k_r}(J^*,J) 
\end{equation}
 is a projection. Thus $\sPA_r$ is closed with respect to
  arbitrary direct sums, 
 restrictions to reducing subspaces and unital $*$-representations. 
\end{lemma}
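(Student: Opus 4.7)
The plan is to reduce the characterization, via the polar decomposition, to a spectral argument that forces $J$ into $\sQA_r,$ and then to re-interpret the projection condition as $U$-invariance of $K_{+1}.$

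The forward direction is a direct block calculation. Writing $J = UD$ with $U = \begin{pmatrix} V & E \\ 0 & W^* \end{pmatrix}$ unitary and $D = rP_+ + r^{-1}P_-,$ one finds $J^*J = D^2 = r^2P_+ + r^{-2}P_-$ and $J^{-*}J^{-1} = UD^{-2}U^* = r^{-2}UP_+U^* + r^2UP_-U^*;$ assembling and using $P_- = I - P_+$ gives
\[
 \frac{1}{k_r}(J,J^*) \;=\; P_+ - UP_+U^* \;=\; EE^*,
\]
the projection onto $\ker V^*.$ A symmetric computation yields $\frac{1}{k_r}(J^*,J) = -EE^*,$ as already noted in the introduction.

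For the converse, suppose the condition holds. Since $\frac{1}{k_r}(J,J^*) \succeq 0$ (being a projection), $J \in \PA_r \subseteq \QA_r$ and so $\sigma(J^*J) \subseteq [r^2, r^{-2}].$ Take the polar decomposition $J = U|J|$ with $U$ unitary and set $A = J^*J.$ Then $JJ^* = UAU^*,$ $J^{-*}J^{-1} = UA^{-1}U^*$ and $J^{-1}J^{-*} = A^{-1},$ so the equation $\frac{1}{k_r}(J,J^*) + \frac{1}{k_r}(J^*,J) = 0$ translates to
\[
 C + UCU^* \;=\; 2(r^{-2}+r^2)\,I, \qquad C := A + A^{-1}.
\]
By spectral mapping, $\sigma(C) \subseteq [2, r^{-2}+r^2],$ while the identity forces $\sigma(C) = 2(r^{-2}+r^2) - \sigma(C).$ The only subset of $[2, r^{-2}+r^2]$ invariant under the reflection $\mu \mapsto 2(r^{-2}+r^2) - \mu$ is $\{r^{-2}+r^2\},$ so $C = (r^{-2}+r^2)I,$ whence $A^2 - (r^{-2}+r^2)A + I = 0$ and $\sigma(A) \subseteq \{r^2, r^{-2}\}.$ Thus $A = r^2 P_+ + r^{-2}P_-$ for spectral projections summing to $I,$ placing $J = UD \in \sQA_r$ with $D = rP_+ + r^{-1}P_-.$ The identity $\frac{1}{k_r}(J,J^*) = P_+ - UP_+U^*$ from the forward direction now applies, and the standard fact that a difference of orthogonal projections is a projection exactly when one range contains the other yields $UK_{+1} \subseteq K_{+1}.$ This forces $U$ into the required block upper triangular form, and so $J \in \sPA_r.$

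The spectral step forcing $C = (r^{-2}+r^2)I$ is the main obstacle; everything else is algebraic bookkeeping. The three closure properties are immediate from the characterization: direct sums, restrictions to reducing subspaces, and unital $*$-representations (the last of which preserve inverses of invertible operators) all commute with the formation of $\frac{1}{k_r}(J,J^*)$ and $\frac{1}{k_r}(J^*,J)$ and send projections to projections.
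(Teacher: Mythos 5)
Your proof is correct, and it follows the same two-step skeleton as the paper's argument -- first force $J\in\sQA_r$, then use the projection hypothesis to make the unitary factor block upper triangular -- but both steps are carried out by different means. For the first step the paper reads the displayed condition as $\frac{1}{k_r}(J,J^*)=-\frac{1}{k_r}(J^*,J)$ (the equation in the statement is a typographical slip, which you interpret the same way), rearranges it into $\mu-J^*J-(J^*J)^{-1}=-\mu+JJ^*+(JJ^*)^{-1}$ with $\mu=r^{-2}+r^2$, and observes via Proposition~\ref{p:QA-boundary-alt}, applied to both $J$ and $J^*$, that the left side is positive semidefinite and the right side negative semidefinite; hence both vanish and the equality case of that proposition gives $J\in\sQA_r$. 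Your polar-decomposition and spectral-reflection argument for $C=A+A^{-1}$ reaches the same conclusion correctly, though it in effect re-proves the equality case of Proposition~\ref{p:QA-boundary-alt} rather than quoting it; note that, exactly as in the paper, you must invoke the nontrivial inclusion $\PA_r\subseteq\QA_r$ to get $\sigma(A)\subseteq[r^2,r^{-2}]$, since without that bound the reflection only yields symmetry of $\sigma(C)$ about $\mu$, not a single point. For the second step the paper compares the $(2,2)$ block of $\mathrm{diag}(r^{-2},r^2)-U\,\mathrm{diag}(r^{-2},r^2)\,U^*=\nu P$ and uses positivity of both sides to kill $U_{2,1}$; your route -- the identity $\frac{1}{k_r}(J,J^*)=P_+-UP_+U^*$ together with the standard fact that a difference of orthogonal projections is a projection precisely when one dominates the other -- is a clean alternative, and it makes transparent the paper's closing remark that positive semidefiniteness of $\frac{1}{k_r}(J,J^*)$ already suffices, since $P_+-UP_+U^*\succeq 0$ alone forces $UK_{+1}\subseteq K_{+1}$. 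The forward computation and the closure properties are handled essentially as in the paper.
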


\begin{proof}
 Direct computation shows  if $J\in \sPA_r,$ then the 
 relevant conditions are  satisfied.

 Now suppose $J$ is an invertible operator satisfying
 the given conditions. 
 Let $\mu=r^{-2}+r^2$ and $\nu=r^{-2}-r^2.$ 
 Multiplying equation~\eqref{e:Jreps}
 by $\nu$ gives,
\[
 \mu-J^*J -J^{-*}J^{-1} = -\mu + JJ^* + J^{-1}J^{-*}.
\]
 Rearranging, 
\begin{equation}
\label{e:Jreps1}
 \mu-J^*J - (J^*J)^{-1} = -\mu + JJ^* + (JJ^*)^{-1}.
\end{equation}
 Since $J$ and $J^*$ are in $\QA_r,$ the left hand side of equation~\eqref{e:Jreps1}
 is positive semidefinite and the right hand side is negative
 semidefinite.
 Hence both sides are $0$ and therefore,
  by Proposition~\ref{p:QA-boundary-alt},  $J\in \sQA_r.$ Thus
 there exists a unitary $U$ such that (up to unitary equivalence),
\[
 J=U\begin{pmatrix} r & 0\\0 & r^{-1}\end{pmatrix}.
\]
 It follows that 
\[
\nu P_{2,2} = \nu \, \frac{1}{k_r}(J,J^*)=
   \mu - \begin{pmatrix} r^2 & 0\\0 & r^{-2}\end{pmatrix} -
  U\begin{pmatrix} r^{-2} & 0\\0 & r^{2}\end{pmatrix}U^*,
\]
 where $P=\frac{1}{k_r}(J,J^*)$ is a projection. Simplifying,
\begin{equation}
\label{e:Jreps2}
\begin{pmatrix} r^{-2} & 0\\0 & r^{2}\end{pmatrix} -
  U\begin{pmatrix} r^{-2} & 0\\0 & r^{2}\end{pmatrix}U^*
  =\nu P,
\end{equation}
 Writing
 $U= (U_{j,k} )_{j,k=1}^2$ and 
 comparing the $(2,2)$ (block) entries from
  equation~\eqref{e:Jreps2} and using the fact 
 that $U$ is unitary,
\[
\nu P_{2,2}=  r^2 - r^{-2}  U_{2,1}U_{2,1}^* - r^2 U_{2,2}U_{2,2}^* 
 = -(r^{-2}-r^2) U_{2,1}U_{2,1}^* = -\nu \, U_{2,1} U_{2,1}^*.
\]
 Since both $U_{2,1}U_{2,1}^*$ and $P_{2,2}$ are positive
 semidefinite, both $U_{2,1}$ and $P_{2,2}$ are $0$. 
 Hence $U$ is block upper triangular and thus  $J\in \sPA_r.$

 To prove the last statement, note the conditions
 characterizing membership in $\sPA_r$ are all
  invariant under arbitrary direct sums, 
 restrictions to reducing subspaces and unital $*$-representations. 
\end{proof}

 To prove item~\eqref{i:Pextremal} of Corollary~\ref{c:BY},
 suppose $J\in \sPA_r$ dilates to $F\in \PA_r.$
 Thus $J$ is in $\sQA_r$ and dilates to $F\in \QA_r.$
 By item~\eqref{i:Qextremal} of Theorem~\eqref{t:QA},
 the dilation is trivial.

\subsection{The Agler boundary of $\PA_r$}
 To this point, we have seen 
 that $\sPA_r$ is closed with respect to
 direct sums, unital $*$-representations and restrictions
 to reducing subspaces. Moreover, 
 Theorem~\ref{t:BY} says each element of $\PA_r$
 lifts to an element of $\sPA_r.$ To show 
 that $\sPA_r$ is the smallest subcollection of $\PA_r$
 with  these properties, and is thus
 the Agler boundary, it suffices to show that
 each $J\in \sPA_r$ is extremal in $\PA_r;$ that
 is, if $J\in \sPA_r[H]$ and $F\in \PA_r[K]$ and
 there is an isometry $V:H\to K$ such that 
\begin{equation}
 \label{e:Aboundary}
 VJ=FV,
\end{equation}
 then the range of $V$ reduces $J.$ To prove this statement,
 observe  that
 equation~\eqref{e:Aboundary} immediately implies
\[
 J^n = V^* F^n V
\]
 for all integers $n.$  Hence $J$ dilates to $F.$ By
 item~\eqref{i:Pextremal}, the range of $V$ reduces $F.$

\begin{remark}\rm
 Lemma~\ref{l:Jreps} provides a C-star characterization of the boundary of $\PA_r.$ (Compare with Remark~\ref{r:Ag-boundary-Cstar}.)

 Also note, in Lemma~\ref{l:Jreps},  the condition $\frac{1}{k_r}(J,J^*)$ is a
 projection can be replaced with $\frac{1}{k_r}(J,J^*)$ is 
 positive semidefinite.
\end{remark}

\section{Further remarks}
 \label{s:musing}
The paper concludes with a few remarks about 
 spectral constants for $\QA_r$ 
 and  musing about Nelson's trick and 
 Ando's Theorem in 
Subsections~\ref{s:inner},
 and \ref{s:Ando} respectively.

\subsection{Spectral constants}
\label{s:inner}
 To more easily connect with the existing literature, we now work with the
 annulus, \index{$\fA_q$}
\[
  \fA_q =\{ z\in \CC: q<|z|<1\}.
\]
 It is conformally equivalent to the annulus $\AA_{q^\frac12}.$
 We update the definitions of $\PA_q$ and $\QA_q$
 accordingly.

 Operators in $\PA_q$ and $\QA_q$ do not necessarily 
 have the annulus as a spectral set, but one can ask,
 what are the \df{spectral constants} 
\[
\kappa_{\cF} =  \sup\{ \|\rr(T)\| :
  T\in \cF, \ \ \rr\in R(\fA_q), \ \ \|\rr\|_\infty\le 1\},
\]
 for $\cF$ either $\PA_q$ or $\QA_q.$  Note that 
 it suffices to optimize not over all of $\PA_q$
 or $\QA_q,$ but just over their dilation boundaries.

 Tsikalas \cite{TsPA}
 shows $\kappa_{\PA_q}$ is $\sqrt{2}$ independent of $q$ (in  \cite{BY}
 the inequality $\kappa_{\PA_q}\le \sqrt{2}$ is obtained).  
    For $\QA_q$ there are the estimates
\begin{equation}
\label{e:QA-spec-bounds}
 2\le   \kappa_{\QA_q} \le 1+\sqrt{2},
\end{equation}
 with the lower bound due to Tsikalas \cite{TsQA}
 and the upper bound to  Crouzeix and Greenbaum \cite{CG}.
 The estimate in \cite{TsQA} is obtained
 by a clever choice of element of the dilation boundary 
 of $\QA_r.$ 
 
 Let \df{$\sA(\fA_q)$} denote the annulus algebra, consisting
 of those functions continuous on $\overline{\fA_q}$ 
 and analytic on $\fA_q.$ 
 Fisher proves that convex combinations of inner functions are
 dense in $\sA(\fA_q)$ \cite{Fisher}.  It follows that 
 to determine spectral constants, it suffices to optimize
 over inner functions in $\sA(\fA_q).$ For the annulus it is particularly 
 easy to numerically compute the inner functions.

 The zero set $Z(\psi)$ of an inner function $\psi\in \sA(\fA_q)$ 
 is finite.
 Moreover, the modulus of the product (counting with multiplicity) 
 of the zeros of $\psi$ is $q^k$ for some natural number $k.$ Conversely, given a finite
 subset $F$ of $\fA_q$ such that  the modulus of the product
 of the elements of $F$ is $q^k$ for some natural number $k,$ 
 then there is an inner function $\psi$ such
 that $Z(\psi)=F,$ determined uniquely up to a rotation.

 One way to construct these inner functions is as follows \cite{mcshen}.
 Let $f(\alpha,t)$ denote the Jordan-Kronecker function,
\[
 f(\alpha,t) = \sum_{n=-\infty}^\infty \frac{\alpha^n}{1-tq^{2n}}.
\]
Given $w\in \fA_q,$ let 
\[
B_w(z) = f(z\ow,|w|^2)
\]
The function $B_w:\fA_q\to \CC$ has constant (but different) modulus on each 
boundary component of $\fA_q$ and vanishes 
 precisely at $w.$   Given $W=\{w_1,\dots,w_m\}\subseteq  \fA_q$
 and a positive integer $k$ such that $| \prod w_j | = q^k,$
 let 
\[
 \tau(z) = \frac{1}{z^k} \prod B_{w_j}(z).
\]
The function $\psi_W:\fA_q\to\DD$ defined by
\[
 \psi_W(z) = \frac{\tau(z)}{\tau(1)}
\]
 vanishes precisely on the set $\{w_1,\dots,w_m\}$
 and has modulus $1$ on the boundary of $\fA_q.$

 To establish the lower bound from 
 equation~\eqref{e:QA-spec-bounds},  Tsikalas~\cite{TsQA} applies
 the functions 
\[
  f_n(z) = \frac{z^n + \frac{q^n}{z^n}}{1+q^n}
\] 
 to a clever choice of operator from $\sQA_q$ that, in a  certain
 sense, contains arbitrarily large permutations. 
 While not inner, the functions $f_n$ are, loosely, 
 asymptotically inner.  Numerical experiments suggest
 $2$ is the optimal spectral constant for the quantum annulus.

\subsection{Ando's inequality}
\label{s:Ando}
An optimist hopes Nelson's trick can be applied to prove the following two variable analogue of 
the von Neumann inequality.
	\begin{theorem}[Ando \cite{Ando}]
		Let $p$ be a polynomial.
		Let $T_1, T_2$ be bounded operators on some Hilbert space such that $\|T_1\|, \|T_2\|\leq 1$ and $T_1$ commutes with $T_2.$
		Then,
			$$\|p(T_1,T_2)\|\leq \sup_{z\in\mathbb{D}^2}|p(z)|.$$
	\end{theorem}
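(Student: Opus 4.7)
The plan is to mimic the doubly-commuting construction of Subsection~\ref{s:double} while relaxing the double-commutation hypothesis to mere commutation. After a routine finite-dimensional approximation, assume $T_1,T_2$ are commuting strict contractions on $\CC^n.$ The goal is to produce matrix-valued analytic functions $F_1,F_2:\DD\to M_n(\CC)$ which (i) commute pointwise on $\DD,$ (ii) are unitary-valued on $\partial\DD,$ and (iii) satisfy $F_j(0)=T_j.$ Granting such a pair, the multiplication operators $M_{F_1},M_{F_2}$ on $L^2(\TT)\otimes\CC^n$ are commuting unitaries, and the isometry $V:\CC^n\to L^2(\TT)\otimes\CC^n,$ $Vx=1\otimes x,$ satisfies $V^*p(M_{F_1},M_{F_2})V=p(T_1,T_2)$ for every polynomial $p,$ by analyticity of $p(F_1,F_2)$ at the origin. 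Ando's inequality then follows from
\[
\|p(T_1,T_2)\|\le\|p(M_{F_1},M_{F_2})\|=\sup_{\zeta\in\TT}\|p(F_1(\zeta),F_2(\zeta))\|\le\sup_{\DD^2}|p|,
\]
where the last bound comes from the joint spectral theorem applied to the commuting unitaries $F_1(\zeta),F_2(\zeta)$ and the maximum modulus principle.

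The construction of $F_1$ is precisely Nelson's one-variable argument from Subsection~\ref{s:Nagy}: starting from the polar decomposition $T_1=U_1D_1,$ diagonalize $D_1$ and replace each eigenvalue $\lambda$ by the disc automorphism $b_\lambda(z)=(\lambda-z)(1-\lambda z)^{-1},$ producing an analytic $F_1(z)=U_1D_1(z)$ with $F_1(0)=T_1$ and unitary boundary values. One would like to define $F_2(z)=U_2D_2(z)$ analogously from $T_2=U_2D_2$ and hope that the algebraic identity $T_1T_2=T_2T_1$ forces the stronger pointwise identity $F_1(z)F_2(z)=F_2(z)F_1(z)$ throughout~$\DD.$

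The main obstacle is exactly this pointwise commutation. In the doubly-commuting case the positive parts $D_1,D_2$ are simultaneously diagonalizable and the scalar automorphism deformations commute entry-by-entry; under only $[T_1,T_2]=0$ this simultaneous diagonalization fails and there is no apparent reason for $[F_1(z),F_2(z)]$ to vanish off the origin. In fact any recipe meeting (i)--(iii) yields a commuting unitary dilation, so it is essentially equivalent to Ando's theorem itself, meaning that a genuine Nelson-style proof must supply a new ingredient rather than merely reassemble the doubly-commuting one. Two avenues suggest themselves: first, search for a joint polar or spectral decomposition of the pair $(T_1,T_2)$ that is compatible with the disc-automorphism substitution; second, enlarge the geometric stage to the bidisc, replacing one-variable $F_j:\DD\to M_n(\CC)$ by $F_j:\DD^2\to M_n(\CC)$ that are unitary-valued on $\TT^2$ and pointwise commuting, perhaps obtained by a Schur--Agler interpolation on $\DD^2$ keyed to the commutation relation. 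Either approach meets the honest difficulty head-on; which, if any, admits a clean Nelson-style execution is, I suspect, where the real work lies.
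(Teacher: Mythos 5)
Your proposal is not a proof, and you say as much yourself: the entire content of Ando's theorem is concentrated in the step you leave open, namely forcing the pointwise identity $F_1(z)F_2(z)=F_2(z)F_1(z)$ on $\DD$ from the single algebraic relation $T_1T_2=T_2T_1$. As you correctly observe, any recipe producing commuting, boundary-unitary, analytic $F_1,F_2$ with $F_j(0)=T_j$ already yields a commuting unitary dilation, so the scheme is circular unless a genuinely new ingredient is supplied. This is in fact exactly the situation in the paper: Ando's theorem is quoted there with a citation to Ando's original article and is \emph{not} proved; the authors explicitly state only the \emph{conjecture} that a two-variable Nelson's-trick argument should exist (and that it cannot exist in three variables, by irreducibility considerations for the variety of commuting triples). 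So your sketch reproduces the paper's open program rather than a proof of the theorem. The known proofs go a different way: Ando first dilates the commuting pair of contractions to a commuting pair of isometries (a Sch\"affer-type construction in which the defect spaces are matched by a cleverly chosen unitary -- this is where commutation is actually used), then extends commuting isometries to commuting unitaries, and finally applies the joint spectral theorem and the maximum principle on $\TT^2$; that defect-space unitary is precisely the ``new ingredient'' your outline lacks.

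A secondary gap: the opening reduction ``after a routine finite-dimensional approximation, assume $T_1,T_2$ are commuting strict contractions on $\CC^n$'' is not routine. Passing to strict contractions via $rT_j$, $r\uparrow 1$, is fine, but cutting down to finite dimensions is not: compressing to a joint Krylov subspace destroys commutativity (the compressions $P_KT_1P_K$ and $P_KT_2P_K$ need not commute), and there is no straightforward way to approximate a commuting pair of Hilbert-space contractions by commuting matrices while controlling $\|p(T_1,T_2)\|$. Even granting the finite-dimensional case, your argument would still need a separate justification for this reduction. So the proposal has two gaps, one of which (pointwise commutation of the Nelson deformations) is the heart of the matter and is acknowledged but not closed.
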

	Ando's inequality fails for commuting triples \cite{Parrott, Var}.

Recall the following result of Gerstenhaber.
	\begin{theorem}[Gerstenhaber \cite{Ger}]
		The algebra generated by a commuting pair of $n$ by $n$ matrices is at most $n$ dimensional.
	\end{theorem}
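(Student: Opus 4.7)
The plan is to prove Gerstenhaber's theorem by combining an elementary computation in a generic case with a semicontinuity argument on the variety of commuting pairs. Throughout, for a commuting pair $A,B\in M_n(\CC)$ let $\mathcal{A}(A,B)\subseteq M_n(\CC)$ denote the unital $\CC$-algebra they generate. By the Cayley--Hamilton theorem, $\mathcal{A}(A,B)$ is spanned by the $n^2$ monomials $\{A^iB^j:0\le i,j\le n-1\},$ so a priori $\dim\mathcal{A}(A,B)\le n^2.$

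My first step is to dispatch the generic case. Set
\[
\mathcal{C}_n=\{(A,B)\in M_n(\CC)\times M_n(\CC):AB=BA\}
\]
and let $\mathcal{U}\subseteq\mathcal{C}_n$ be the Zariski open locus on which $A$ has $n$ distinct eigenvalues. For $(A,B)\in\mathcal{U},$ the matrix $B$ commutes with a diagonalizable $A$ of simple spectrum and so is simultaneously diagonalized with $A$; Lagrange interpolation then yields a polynomial $p$ of degree at most $n-1$ with $p(A)=B.$ Hence $\mathcal{A}(A,B)=\CC[A]$ has dimension exactly $n$ on $\mathcal{U}.$

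Next I would observe that $(A,B)\mapsto\dim\mathcal{A}(A,B)$ is lower semicontinuous in the Zariski topology: it is the rank of the linear evaluation map $\Phi_{A,B}\colon\CC^{n^2}\to M_n(\CC)$ sending the basis vector $e_{i,j}$ to $A^iB^j,$ and the locus where the rank of a morphism of vector bundles is at least $k$ is open, being cut out by non-vanishing of $k\times k$ minors. Consequently
\[
\{(A,B)\in\mathcal{C}_n:\dim\mathcal{A}(A,B)\le n\}
\]
is Zariski closed in $\mathcal{C}_n.$ To finish I would invoke the theorem of Motzkin--Taussky that $\mathcal{C}_n$ is irreducible: then this closed subset contains the nonempty open $\mathcal{U},$ so it must equal all of $\mathcal{C}_n,$ yielding the claim.

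The main obstacle is precisely the irreducibility of $\mathcal{C}_n.$ The standard route analyzes the projection $\pi\colon\mathcal{C}_n\to M_n(\CC)$ onto the first coordinate; the fiber over $A$ is its centralizer $\mathcal{Z}(A),$ and one must verify $\dim\mathcal{Z}(A)\ge n$ always, with equality precisely on the dense open locus of regular (cyclic) matrices. On this locus the preimage is irreducible of dimension $n^2+n,$ and a stratification of the complement by Jordan type, combined with a dimension count centralizer-by-centralizer, rules out any additional top-dimensional component. This is where the real work lies; the rest of the argument is formal.
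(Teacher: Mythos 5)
The paper itself offers no proof of this statement: it is recalled verbatim from Gerstenhaber's 1961 paper as background for a remark about Ando's inequality, so the only meaningful comparison is with the classical arguments. Your skeleton (generic locus where $A$ has simple spectrum, lower semicontinuity of $\dim\mathcal{A}(A,B)$ as a rank, and irreducibility of the commuting variety $\mathcal{C}_n$) is exactly the classical Motzkin--Taussky/Gerstenhaber route, and the first two steps are carried out correctly: on $\mathcal{U}$ one has $B=p(A)$ and $\mathcal{A}(A,B)=\CC[A]$, and $\{\dim\le n\}$ is Zariski closed, so granting irreducibility the conclusion follows since a nonempty open subset of an irreducible variety is dense.

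The gap is in the step you yourself flag as the real work, and the mechanism you propose for it does not suffice. A stratification of the non-regular locus by Jordan type, with the fiber over $A$ being the centralizer $\mathcal{Z}(A)$, shows that the preimage of the non-regular matrices has dimension at most $n^2+n-1$; this rules out a \emph{second component of top dimension} $n^2+n$, but it does not rule out an irreducible component of $\mathcal{C}_n$ of smaller dimension lying entirely over non-regular $A$. Krull's principal ideal theorem only forces every component of $\mathcal{C}_n$ to have dimension at least $n^2+1$ (the equation $AB-BA=0$ contributes $n^2-1$ independent conditions), so components of dimension strictly between $n^2+1$ and $n^2+n$ are not excluded by any dimension count; and such a component would lie outside the closure of $\mathcal{U}$, breaking your semicontinuity argument. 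The actual content of the Motzkin--Taussky theorem is the statement that every commuting pair $(A,B)$ is a limit of commuting pairs $(A',B')$ with $A'$ nonderogatory, proved by a deformation/induction argument (reduce via the generalized eigenspace decomposition of $A$ to the single-eigenvalue case, then perturb explicitly using a cyclic-vector or filtration construction), not by comparing dimensions of strata. If you simply invoke Motzkin--Taussky irreducibility as a known theorem, your proof is complete and is the standard derivation; as written, the justification of irreducibility is not.
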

	It is known that for $4$-tuples of commuting matrices, the above theorem fails, and it is unknown what happens for triples. The variety of commuting pairs is irreducible with diagonalizable elements being dense, whereas for large dimensions for triples it is known not to be \cite{sivic}.

We conjecture that there should be a Nelson's trick type argument in $2$ variables. Such an argument cannot work in $3$ variables because of the failure of irreducibility of the variety. In turn, failure of irreducibility should imply that the distinguished boundary (or the place where functions are forced to take their maximum) contains nonunitary points in $3$ or more variables.

\addresseshere

\newpage

\printindex

\end{document}